\newtheorem{theorem}{Theorem}[section]
\newtheorem{lemma}[theorem]{Lemma}
\newtheorem{cor}[theorem]{Corollary}
\newtheorem{prop}[theorem]{Proposition}
\theoremstyle{definition}
\newtheorem{definition}[theorem]{Definition}
\newtheorem{definitions}[theorem]{Definitions}
\theoremstyle{remark}
\newtheorem{remark}[theorem]{Remark}
\newtheorem{notn}[theorem]{Notation}
\newtheorem{eg}[theorem]{Example}
\newtheorem{egs}[theorem]{Examples}
\numberwithin{equation}{section}
\DeclareMathOperator{\ham}{ham} 
\DeclareMathOperator{\Pz}{Z_P} 
\DeclareMathOperator{\curl}{curl} \DeclareMathOperator{\grad}{grad}
 \DeclareMathOperator{\Pspec}{P.Spec}
\DeclareMathOperator{\spec}{Spec} 
\DeclareMathOperator{\Jac}{Jac} \DeclareMathOperator{\Jacid}{J}
\newcommand{\ov}{\overline}
\newcommand{\wA}{\widehat{A}}
\newcommand{\wM}{\widehat{M}}
\newcommand{\squi}{\rightsquigarrow}
\newcommand{\C}{{\mathbb C}}
\newcommand{\N}{{\mathbb N}}
\newcommand{\Q}{{\mathbb Q}}
\newcommand{\Proj}{\C{\mathbb P}^1}
\begin{document}
\bibliographystyle{amsalpha}
\author{David A. Jordan}

\address
{School of Mathematics and Statistics\\
University of  Sheffield\\
Hicks Building\\
Sheffield S3~7RH\\
UK}

\email{d.a.jordan@sheffield.ac.uk}

\author{Sei-Qwon Oh}

\address{Department of Mathematics, Chungnam National  University, Daejeon 305-764, Korea} \email{sqoh@cnu.ac.kr}
\thanks{This work was supported by National Research Foundation of Korea Grant funded by the Korean Government 2009-0071102.}

\title[Poisson brackets and Poisson spectra]{
Poisson brackets and Poisson spectra in polynomial algebras}


\subjclass[2010]{Primary 17B63; Secondary 16S36, 13N15, 16W25, 16S80}

\dedicatory{Dedicated to Ken Goodearl on his 65th birthday.}

\keywords{Poisson algebra, Poisson prime ideal, polynomial algebra}

\begin{abstract}
Poisson brackets on the polynomial algebra $\C[x,y,z]$ are studied. A description of all such brackets
is given and, for a significant class of Poisson brackets, the Poisson prime ideals and Poisson primitive ideals are determined.
The results are illustrated by numerous examples.
\end{abstract}

\maketitle

\section{Introduction}
The main purpose of this paper is to analyse Poisson brackets on the polynomial algebra $A:=\C[x,y,z]$ and their prime and primitive Poisson ideals. Various Poisson brackets on this algebra have appeared in the literature, for example in \cite{BGn,goodsemiclass,dajfdsPm,dajns,pich}, and several of these belong to one particular class of brackets, the \emph{Jacobian} or \emph{exact} brackets, determined by a \emph{potential} $a\in A$. For these brackets,
\[\{x,y\}=\frac{\partial a}{\partial z},\quad \{y,z\}=\frac{\partial a}{\partial x},\quad \{z,x\}=
\frac{\partial a}{\partial y}.\]
If $\{-,-\}$ is a Poisson bracket on $A$ and $b\in A$ then $b\{-,-\}$ is also a Poisson bracket, a phenomenon that does not extend to $\geq 4$ indeterminates. Poisson brackets behave well under localization and completion and we shall show that for any Poisson bracket on $A=\C[x,y,z]$ there exist a completion $\wA$ and elements
$a, b\in \wA$ such that
\[\{x,y\}=b\frac{\partial a}{\partial z},\quad \{y,z\}=b\frac{\partial a}{\partial x},\quad \{z,x\}=
b\frac{\partial a}{\partial y}.\]
This is an algebraic analogue of \cite[Theorem
5]{grabetal}. A special case, generalising the exact brackets, is obtained by taking $a=st^{-1}\in \C(x,y,z)$ and setting $b=t^2$. The main results of the paper, Theorems~ \ref{primitive} and ~\ref{Pspecexact}, determine, respectively, the Poisson primitive ideals and the Poisson prime ideals for these brackets. The non-zero Poisson prime ideals occur in two ways, each of which is geometric in nature. If $I$ is the ideal of $A$ generated by $\{x,y\}$, $\{y,z\}$ and $\{z,x\}$ then any prime ideal containing $I$ is Poisson prime. This gives rise to a Zariski-closed subset of the prime spectrum of $A$ lying within the Poisson prime ideals. This subset includes any Poisson maximal ideals of $A$. The only other source of non-zero Poisson prime ideals is the set of principal ideals of the form $f=\lambda s-\mu t$ where $(\lambda,\mu)\in \Proj$. These are always Poisson ideals and their irreducible factors $u$ generate Poisson prime ideals.
If, for $(\lambda,\mu)\in \Proj$, the polynomials $\lambda t-\mu s$  are all irreducible then the corresponding surfaces $S_f:=\{(\alpha,\beta,\gamma)\in \C^3 : f(\alpha,\beta,\gamma)=0\}$, where
$f=\lambda t-\mu s$ for some $(\lambda,\mu)\in \Proj$, form a partition of $\C^3$. Any singularities of these surfaces correspond to maximal ideals containing the above ideal $I$.
The Poisson primitive ideals are also of two types, namely the Poisson maximal ideals and the principal ideals where $u$ is an irreducible factor, with multiplicity one, of $\lambda t-\mu s$ for some $(\lambda,\mu)\in \Proj$.

These results will be illustrated, in Section 4, by a number of examples.
Many of these examples fit into the philosophy of \cite{goodsemiclass}, in that they are semiclassical limits of families of noncommutative algebras $T_q$ for which there are analogies between the prime ideals (resp. primitive ideals) of the generic instances of $T_q$ and the Poisson prime ideals (resp. Poisson primitive ideals) of $A$. However there may be prime ideals of $T_q$ that are not completely prime, in particular annihilators of finite-dimensional simple modules, and these do not fit well into this picture. The desirable behaviour, which often occurs is that there is a bijection $\phi$ the Poisson  prime ideals of $A$ and the completely prime ideals of $T_q$, for generic $q$, with both $\phi$ and $\phi^{-1}$ preserving inclusions. Although we will comment on this in individual examples we will not present full details of the noncommutative side of the picture.

Although the results are presented over $\C$, they hold for any algebraically closed field of characteristic $0$.

\begin{definitions}By a \emph{Poisson algebra} we
mean a commutative $\C$-algebra $A$ with a bilinear product
$\{-,-\}:A\times A\rightarrow A$ such that $A$ is a Lie algebra
under $\{-,-\}$ and, for all $a\in A$, $\{a,-\}$ is a
$\C$-derivation of $A$. Such a bracket will be called a \emph{Poisson
bracket} on the algebra $A$. For $a\in A$, the derivation
$\ham(a):=\{a,-\}$ is called a \emph{Hamiltonian} derivation of $A$
(or a Hamiltonian vector field). The \emph{Poisson centre},
$\Pz(A)$, of a Poisson algebra $A$ is $\{a\in A: \{a,b\}=0\mbox{ for
all }b\in A\}$.
\end{definitions}

\begin{eg}If $\mathfrak{g}$ is a complex Lie algebra then the Lie bracket on
$\mathfrak{g}$ (identified with the degree one part of
$S(\mathfrak{g})$) extends to a Poisson bracket on the symmetric
algebra $S(\mathfrak{g})$, called the Kirillov-Kostant-Souriau bracket
\cite[III.5.5]{BGl}.
\end{eg}

\begin{definitions}\label{basicdefs}
Let $\Delta$ be a set of derivations of a ring $R$. An ideal $I$ of $R$ is a $\Delta$\emph{-ideal} if $\delta(I) \subseteq I$ for all $\delta\in \Delta$.
To say that $R$
is $\Delta$\emph{-simple} means that $0$ is the only proper $\Delta$-ideal $I$ of $R$.
To say that a $\Delta$-ideal $P$ of $R$ is $\Delta$\emph{-prime} means that if $I$ and $J$ are $\Delta$-ideals such that
$IJ\subseteq P$ then $I\subseteq P$ or $J\subseteq P$.
If $\Delta=\{\delta\}$ is a singleton, we replace $\Delta$ in these definitions by $\delta$
rather than by $\{\delta\}$.
\end{definitions}

\begin{definitions}
Let $A$ be a Poisson algebra and let $\Delta$ be the set of Hamiltonian
derivations. A $\Delta$-ideal of $A$ is then called a \emph{Poisson  ideal} of $A$.
The Poisson algebra $A$ is \emph{Poisson simple} if it is
$\Delta$-simple and a $\Delta$-prime ideal $P$ of $A$ is said to be \emph{Poisson
prime}. If $I$ is a Poisson ideal of $A$ then
$A/I$ is a Poisson algebra in the obvious way:
$\{a+I,b+I\}=\{a,b\}+I$.
\end{definitions}

\begin{remark}
 In any Noetherian $\C$-algebra, the minimal primes over a $\Delta$-ideal are also $\Delta$-ideals, see \cite[3.3.3]{dix}. Consequently an ideal is  $\Delta$-prime if and only if it is both a $\Delta$-ideal and a prime ideal.
 In particular, in any Noetherian Poisson algebra, the minimal primes over a Poisson ideal are Poisson
 and an ideal is  Poisson prime if and only if it is both a Poisson ideal and a prime ideal.\end{remark}

\begin{definitions}
 We denote  the prime spectrum of a ring $R$ with the Zariski topology  by
$\spec R$. For a Poisson algebra $A$, the \emph{Poisson prime spectrum} $\Pspec A$ is the
subspace of $\spec A$ consisting of the Poisson prime ideals with the induced topology.
Thus a closed set in $\Pspec A$ has the form $V(I):=\{P\in \Pspec A:P\supseteq I\}$ for some ideal $I$ of $A$. As is observed in \cite{goodsemiclass}, replacing $I$ by the Poisson ideal it generates, $I$ can be assumed to be a Poisson ideal.
A Poisson prime ideal $P$ of $A$ is \emph{locally closed} if it is locally closed in the induced  topology, that is, it is not the intersection of the Poisson ideals that strictly contain it.
If $P$ is a Poisson prime ideal of $A$ then the Poisson bracket on $A/P$ extends to the quotient field $Q(A/P)$, by the quotient rule for derivations, and $P$ is said to be \emph{rational} if $\Pz (Q(A/P))=\C$.

\end{definitions}

\begin{definitions} By a \emph{Poisson maximal ideal} we mean a maximal ideal
that is also Poisson whereas by a \emph{maximal Poisson ideal} we
mean a Poisson ideal that is maximal in the lattice of Poisson
ideals. Similarly the term \emph{Poisson principal prime ideal} will
mean a Poisson prime ideal that is principal as an ideal, rather
than the smallest Poisson ideal containing a nominated element, and
\emph{Poisson height $n$ prime ideal} will mean a Poisson prime ideal
that has height $n$ as a prime ideal.

Let $I$ be an ideal of a Poisson algebra $A$. Following \cite[\S
3.2]{BGn}, the  \emph{Poisson core} of $I$, $\mathcal{P}(I)$, is
the largest  Poisson ideal of $A$ contained in $I$. If $I$ is prime
then $\mathcal{P}(I)$ is prime and any Poisson prime ideal that is
the Poisson core of a maximal ideal is said to be \emph{Poisson primitive}.

\end{definitions}

\begin{definition}
The ideal $J$ of a Poisson algebra $A$ generated by all elements of
the form $\{a,b\}$ where $a,b\in A$, or, equivalently, by all such
elements were $a$ and $b$ belong to a generating set for $A$,  is a
Poisson ideal such that the induced Poisson bracket on $A/J$ is
zero. We shall call a Poisson ideal $I$ of $A$ \emph{residually
null} if the induced Poisson bracket on $A/I$ is zero or,
equivalently, $J\subseteq I$. The determination of the residually
null Poisson prime ideals of $A$ is thus equivalent to the
determination of the prime spectrum of $A/J$.
\end{definition}

\begin{definitions}
If $T$ is a $\C$-algebra with a central non-unit non-zero-divisor
$t$ such that $B:=T/tT$ is commutative then there is a Poisson
bracket $\{-,-\}$ on $B$ such that $\{\ov u,\ov
v\}=\ov{t^{-1}[u,v]}$ for all $\ov u=u+tT$ and $\ov v=v+tT\in B$.
Following \cite[Chapter III.5]{BGl}, we refer to $T$ as a \emph{quantization} of the Poisson algebra $B$.  By a \emph{ deformation} of
$B$, we mean any $\C$-algebra of the form $T/(t-\lambda)T$, where
$\lambda\in \C$ is such that the central element $t-\lambda$ is a
non-unit in $T$. \end{definitions}

\begin{remark}We shall be interested in comparing the Poisson
spectrum of a Poisson algebra with the completely prime part of the prime spectrum of a deformation $U$. We shall not do this in a formal topological way partly because the notion of completely prime spectrum has two potentially different interpretations arising from possible inclusion of incompletely prime ideals in completely prime ideals. This issue does not arise when, as is often the case, all incompletely prime ideals are maximal. The determination of
the incompletely prime ideals can be significantly more technically complex than that of the completely prime ideals.
\end{remark}

\begin{remark}
\label{basicPbs} Let $A=\C[x,y,z]$. We shall be considering certain
constructions of Poisson brackets on $A$, some of which are
well-known. There are corresponding constructions of Poisson
brackets for certain overrings of $A$, in particular
for the quotient field $Q(A)=\C(x,y,z)$ of $A$ and for the $M$-adic completion $\wA$ of $A$ for a maximal ideal $M$ of $A$.
Whenever we use the notation $\wA$ it should be interpreted as such
a completion. We denote the maximal ideal $M\wA$ of $\wA$ by $\wM$.
Although these constructions of Poisson brackets for $\wA$ use
elements of $\wA$ as input data in their construction, it is
possible for $A$ to be a Poisson subalgebra of $\wA$, giving rise to
further classes of Poisson bracket on $A$.
\end{remark}

\begin{notn}
Let $w=x,y$ or $z$. We shall denote the derivation
$\partial/\partial w$ of $A$ by $\partial_w$. By \cite[Exercise
25.3]{mat}, the derivation $\partial_w$ extends to $\wA$ for all
maximal ideals $M$ of $A$. For $a\in A$ or $\wA$, we denote the
partial derivative $\partial_w(a)$ by $a_w$.
\end{notn}
\begin{notn}\label{defineFbracket}
Let $B=A$ or $\wA$ or $Q(A)$ and let $F=(f,g,h)\in B^3$. There is a bilinear
antisymmetric product $\{-,-\}^F:B\times B\rightarrow B$ such that,
for all $b,c\in B$, \[\{b,c\}^F=\begin{vmatrix}f&g&h\\
b_x&b_y&b_z\\c_x&c_y&c_z\end{vmatrix}.\] Thus $\{b,-\}^F$ is the
derivation
\[(gb_z-hb_y)\partial_x+(hb_x-fb_z)\partial_y+(fb_y-gb_x)\partial_z\]
of $B$. Note that
\begin{equation}\label{bxbybz}
\{b,x\}^F=gb_z-hb_y,\, \{b,y\}^F=hb_x-fb_z\mbox{ and }
\{b,z\}^F=fb_y-gb_x\end{equation} and that
\[\{y,z\}^F=f,\quad \{z,x\}^F=g\;\mbox{ and }\{x,y\}^F=h.\]

Any $\C$-derivation of $A$ is determined by its values on the
generators $x,y,z$. The same is true of any derivation from $A$ to $\wA$  and any such derivation is continuous, in the
$M$-adic and $\wM$-adic topologies, and so, by \cite[Exercise
25.3]{mat}, it extends uniquely to a
$\C$-derivation of $\wA$. Hence any $\C$-derivation of $\wA$ is also
determined by its values on $x,y$ and $z$. It follows that, for
$B=A$ or $\wA$, $\{-,-\}^F$ is the unique bilinear antisymmetric
product $\{-,-\}:B\times B\rightarrow B$ such that $\{y,z\}=f,
\{z,x\}=g$, $\{x,y\}=h$ and,  for all $b\in B$, $\{b,-\}$ is a
$\C$-derivation. The same conclusion holds when $B=Q(A)$, due to the quotient rule for derivations.

For $a,b,c\in A$, let
\[\Jacid_F(a,b,c)=\{a,\{b,c\}^F\}^F+\{b,\{c,a\}^F\}^F+\{c,\{a,b\}^F\}^F.\]
Thus $a,b$ and $c$ satisfy the Jacobi identity for $\{-,-\}^F$ if
and only if $\Jacid_F(a,b,c)=0$.
\end{notn}

\begin{prop}
\label{Jxyzenough} Let $B=A$ or $\wA$ or $Q(A)$ and let $F\in B^3$. Then $B$
is a Poisson algebra under $\{-,-\}^F$ if and only if
$\Jacid_F(x,y,z)=0$.
\end{prop}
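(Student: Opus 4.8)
The plan is to reduce the Jacobi identity, which must hold for every triple in a Poisson algebra, to the single identity $\Jacid_F(x,y,z)=0$. The forward implication is immediate: if $B$ is a Poisson algebra under $\{-,-\}^F$ then the Jacobi identity holds for every triple of elements, in particular for $(x,y,z)$, so $\Jacid_F(x,y,z)=0$. All the work lies in the converse.

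For the converse, the central observation is that $\Jacid_F$ is a $\C$-trilinear map that is a derivation in each of its three arguments. First I would rewrite, using the antisymmetry of $\{-,-\}^F$,
\[\Jacid_F(a,b,c)=\{a,\{b,c\}^F\}^F-\{b,\{a,c\}^F\}^F-\{\{a,b\}^F,c\}^F,\]
and recognise the right-hand side, as a function of $c$ with $a,b$ fixed, as $\big([\ham(a),\ham(b)]-\ham(\{a,b\}^F)\big)(c)$, where $\ham(a)=\{a,-\}^F$. Since a commutator of derivations is a derivation and each Hamiltonian $\ham(\cdot)$ is a derivation of $B$ (built into the definition of $\{-,-\}^F$), $\Jacid_F$ is a derivation in its third argument. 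A short computation shows $\Jacid_F$ is totally antisymmetric (invariant under cyclic permutation, sign-changing under a transposition), so it is in fact a derivation in each of its arguments; $\C$-trilinearity is inherited from the bilinearity of $\{-,-\}^F$.

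Given that $\Jacid_F$ is such a triderivation, the conclusion follows from the fact, established in the Notation above, that every $\C$-derivation of $B$ (for $B=A$, $\wA$, or $Q(A)$) is determined by its values on $x,y,z$. Applying this in each argument in turn yields
\[\Jacid_F(a,b,c)=\sum_{p,q,r\in\{x,y,z\}} a_p\,b_q\,c_r\,\Jacid_F(p,q,r),\]
so that $\Jacid_F$ is determined by its values on triples of generators. By antisymmetry $\Jacid_F(p,q,r)=0$ whenever two of $p,q,r$ coincide, and every remaining value equals $\pm\Jacid_F(x,y,z)$; hence $\Jacid_F(x,y,z)=0$ forces $\Jacid_F\equiv 0$, i.e.\ the Jacobi identity holds throughout $B$. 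Together with the antisymmetry and derivation properties already present in the construction of $\{-,-\}^F$, this makes $B$ a Poisson algebra.

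The step I expect to be the main obstacle is verifying cleanly that $\Jacid_F$ is a derivation in each argument. The rewriting into commutator form makes the third-argument case transparent, but one must check that the antisymmetry argument legitimately transfers the derivation property to the other two arguments, and that the whole argument remains valid in the completion $\wA$ and the quotient field $Q(A)$, where one leans on the continuity and quotient-rule extension results quoted earlier rather than on $B$ being a polynomial ring generated by $x,y,z$.
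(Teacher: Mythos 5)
Your proof is correct, and it takes a genuinely different route from the paper's. The paper bootstraps one argument at a time, with a separate mechanism for each ring: for $B=A$ it shows that $S=\{a\in A:\Jacid_F(a,b,c)=0\ \text{for all}\ b,c\in\{x,y,z\}\}$ is a subalgebra containing the generators and then repeats this in the remaining arguments; for $B=\wA$ it replaces the subalgebra step by $\wM$-adic continuity of $a\mapsto\Jacid_F(a,b,c)$ and a limit of Cauchy sequences from $A$; for $B=Q(A)$ it invokes the quotient rule. You instead make the underlying structure explicit: $\Jacid_F(a,b,-)=[\ham(a),\ham(b)]-\ham(\{a,b\}^F)$ exhibits the Jacobiator as a derivation in the third argument, total antisymmetry (which does follow by the short computation you indicate, using bilinearity and antisymmetry of $\{-,-\}^F$) transfers this to all three arguments, and expanding each argument via $D=D(x)\partial_x+D(y)\partial_y+D(z)\partial_z$ gives $\Jacid_F(a,b,c)=\lvert\Jac(a,b,c)\rvert\,\Jacid_F(x,y,z)$, which kills everything at once. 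What your route buys is uniformity: the case analysis over $A$, $\wA$, $Q(A)$ is absorbed into the facts already recorded in Notation~\ref{defineFbracket}, and your commutator identity explains why the paper's ``routine check'' that $S$ is a subalgebra works --- that check is exactly the derivation property in the first argument. What the paper's route buys is that it needs no expansion formula or antisymmetry bookkeeping, only vanishing on a generating set plus closure properties, which is why it transfers so directly to the completion. One point you should make explicit rather than implicit: the step from ``a derivation of $B$ is determined by its values on $x,y,z$'' to the expansion $D=\sum_w D(w)\partial_w$ requires knowing that $\sum_w D(w)\partial_w$ is itself a derivation of $B$ agreeing with $D$ on the generators; for $\wA$ and $Q(A)$ this uses that the partials $\partial_w$ extend to those rings, which the paper has indeed quoted from \cite{mat} and the quotient rule, so the gap is only expository.
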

\begin{proof}
As $\{-,-\}^F$ is bilinear and antisymmetric and $\{b,-\}^F$ is a
$\C$-derivation for all $b\in B$, the only issue is whether
$\Jacid_F(a,b,c)=0$ for all $a,b,c\in B$. The ``only if" part is
trivial so suppose that $\Jacid_F(x,y,z)=0$ and consider first the
case where $B=A$.

Let $S=\{a\in A:
\Jacid_F(a,b,c)=0\mbox{ for all }b,c\in \{x,y,z\}\}.$
It can be routinely checked that $S$ is a subalgebra of $A$
containing the generators $x, y$ and $z$ and hence that
$\Jacid_F(a,b,c)=0$ for all $a\in A$ and all $b,c\in \{x,y,z\}$.
Similar arguments then show successively that $\Jacid_F(a,b,c)=0$ for
all $a,b\in A$ and all $c\in \{x,y,z\}$ and that $\Jacid_F(a,b,c)=0$
for all $a,b,c\in A$.
 Thus
$\{-,-\}^F$ is a Poisson bracket on $A$.

Now consider the case $B=\wA$ and let
\[S=\{a\in \wA: \Jacid_F(a,b,c)=0\mbox{ for all }b,c\in
\{x,y,z\}\},\]
which is a subalgebra of $\wA$ containing $A$. Fix $b,c\in
\{x,y,z\}$ and let $\theta:\wA\rightarrow \wA$ be such that
$\theta(a)=\Jacid_F(a,b,c)$ for all $a\in \wA$. As
$\theta(\wM^{n+1})\subseteq \wM^n$ for all $n$, $\theta$ is
continuous for the $\wM$-adic topology. Let $a\in \wA$. Then $a=\lim
a_i$ for some Cauchy sequence $\{a_i\}$ in $A$ and $\theta(a_i)=0$
for each $i$. Hence $\theta(a)=\lim \theta(a_i)=0$ and $a\in S$.
Similar arguments apply to show that $\Jacid_F(a,b,c)=0$ for all
$a,b\in \wA$ and all $c\in \{x,y,z\}$ and then that
$\Jacid_F(a,b,c)=0$ for all $a,b,c\in \wA$.

A similar method, with the quotient rule for derivations taking the role of the
$\wM$-adic topology, applies to the case $B=Q(A)$.
\end{proof}

\begin{definition}
Let $B=A$ or $Q(A)$ or $\wA$, let $F=(f,g,h)\in B^3$ and let
$\{-,-\}^F:B\rightarrow B$ be the bilinear antisymmetric product
determined by $F$ as in \ref{defineFbracket}.  If $\{-,-\}^F$ is a
Poisson bracket, that is if $\Jacid(x,y,z)=0$, we shall say that $F$
is a \emph{Poisson triple} on $B$. In other words, $F$ is a Poisson
triple if and only if there is a Poisson bracket on $B$ such that
$\{y,z\}=f, \{z,x\}=g$ and $\{x,y\}=h$.
\end{definition}

\begin{notn}\label{gradcurlJac}
Let $B=A$ or $Q(A)$ or $\wA$, and let $F=(f,g,h)\in B^3$. We shall make use of
the functions $\grad:B\rightarrow B^3$ and $\curl:B^3\rightarrow
B^3$. Thus
\[\grad(f)=(f_x, f_y, f_z)\in B^3\mbox{ and }\curl F=(h_y-g_z,
f_z-h_x,g_x-f_y)\in B^3.\] We shall also make use of the scalar and
vector products on $B\times B$ and the Jacobian
matrix \[\Jac(f,g,h)=\begin{pmatrix}f_x&f_y&f_z\\
g_x&g_y&g_z\\h_x&h_y&h_z\end{pmatrix}.\] \end{notn}

The following formulae of vector calculus are well-known
and are easily checked in the present context. For all $f,g,h\in B$
and all $F, G\in B^3$, \begin{gather}
\label{curlgrad} \curl(\grad f)=0;\\
\label{curlmult} \curl gF=g \curl F-F\times \grad g;\\
\label{tripledot} \grad f.(\grad h\times \grad g)=|\Jac(f,g,h)|;\\
\label{vectororth} F.(F\times G)=0.
\end{gather}
Note that
$|\Jac(f,g,-)|:B\rightarrow B$ is a derivation for each pair $f,g\in
B$, being an $A$-linear combination of the derivations $h_x$, $h_y$ and $h_z$. \label{vectalgbasics}

Much of the following Proposition is well-known, particularly in the
case $B=A$. For example, (1) is in \cite[p. 252]{dml}, (2) in
\cite{Odesskii+,pich}  and (4) in \cite{Odesskii+}.
\begin{prop}
\label{baseprop} Let $B$ be $A$ or $Q(A)$ or $\wA$. Let $f,g,h,a,b\in B$ and
let $F=(f,g,h)\in B^3$.
\begin{enumerate}
\item  $F$ is a Poisson triple if and only if $F.\curl F=0$.
\item
 $\grad(a)$ is a Poisson triple on $B$.
 \item
 If $F$ is a Poisson triple on $B$ then $bF:=(bf,bg,bh)$
 is a Poisson triple on $B$.
\item $b\grad(a)$ is a Poisson triple.
\end{enumerate}\end{prop}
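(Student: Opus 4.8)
The plan is to reduce all four statements to part~(1), and to prove part~(1) by a direct computation identifying $\Jacid_F(x,y,z)$ with $-F.\curl F$. By Proposition~\ref{Jxyzenough}, $F$ is a Poisson triple precisely when $\Jacid_F(x,y,z)=0$, so once the identity $\Jacid_F(x,y,z)=-F.\curl F$ is established, part~(1) is immediate and parts~(2)--(4) follow formally from the vector-calculus formulae \eqref{curlgrad}--\eqref{vectororth}. Since those formulae and Proposition~\ref{Jxyzenough} hold uniformly for $B=A$, $Q(A)$ and $\wA$, there is no need to treat the three cases separately.

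For part~(1), I would expand $\Jacid_F(x,y,z)=\{x,\{y,z\}^F\}^F+\{y,\{z,x\}^F\}^F+\{z,\{x,y\}^F\}^F$ using $\{y,z\}^F=f$, $\{z,x\}^F=g$, $\{x,y\}^F=h$ together with the formulae \eqref{bxbybz} for $\{b,x\}^F$, $\{b,y\}^F$, $\{b,z\}^F$ and antisymmetry. Each of the three brackets becomes a two-term expression in the first partial derivatives of $f$, $g$, $h$; for instance $\{x,f\}^F=hf_y-gf_z$. Collecting terms by $f$, $g$ and $h$ should yield $\Jacid_F(x,y,z)=-\bigl(f(h_y-g_z)+g(f_z-h_x)+h(g_x-f_y)\bigr)=-F.\curl F$, after which part~(1) follows from Proposition~\ref{Jxyzenough}.

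Part~(2) is then immediate: by \eqref{curlgrad}, $\curl(\grad a)=0$, so $\grad(a).\curl(\grad a)=0$ and $\grad(a)$ is a Poisson triple by part~(1). For part~(3), assuming $F.\curl F=0$, I would apply \eqref{curlmult} to write $\curl(bF)=b\curl F-F\times\grad b$ and then compute $(bF).\curl(bF)=b^2\,F.\curl F-b\,F.(F\times\grad b)$; the first term vanishes by hypothesis and the second by the orthogonality identity \eqref{vectororth}, so $bF$ is again a Poisson triple. Finally part~(4) is the composite of parts~(2) and~(3), applied with $F=\grad(a)$.

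The only genuine work is the bookkeeping in part~(1): keeping track of the signs coming from antisymmetry and from the cyclic arrangement of $x,y,z$ while expanding the nested brackets. I do not anticipate a conceptual obstacle, since the remaining parts are formal consequences of the listed vector identities; the main risk is simply a sign error in matching $\Jacid_F(x,y,z)$ to $F.\curl F$.
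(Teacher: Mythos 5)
Your proposal is correct and follows essentially the same route as the paper, which proves (1) via Proposition~\ref{Jxyzenough}, (2) from \eqref{curlgrad} and (1), (3) from \eqref{curlmult}, (1) and \eqref{vectororth}, and (4) from (2) and (3); your identity $\Jacid_F(x,y,z)=-F.\curl F$ (including the sign) checks out. You have simply filled in the computational details that the paper leaves as ``straightforward.''
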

\begin{proof}
The proof is straightforward using Proposition~\ref{Jxyzenough} for (1), \eqref{curlgrad} and (1) for (2), \eqref{curlmult}, (1) and
\eqref{vectororth} for (3) and (2,3) for (4).
\end{proof}

\begin{definition} Let $F=(f,g,h)$ be a Poisson triple on
$B$, where $B=A$ or $Q(A)$ or $\wA$.  We say that $F$ is \emph{exact} (on $B$) if it has the form
$\grad(a)=(a_x,a_y,a_z)$ for some $a\in B$ and  \emph{m-exact} (on
$B$) if it is a multiple of an exact triple, that is if it has the
form $b\grad a=(ba_x,ba_y,ba_z)$ for some $a,b\in B$.

Let $B=A$. If there exists a maximal ideal $M$ of $A$ such that $F=b\grad
a=(ba_x,ba_y,ba_z)$ for some $a,b\in \wA$ such that
$ba_x,ba_y,ba_z\in A$, we say that $F$ is a \emph{cm-exact} bracket
on $A$. If $F=b\grad a=(ba_x,ba_y,ba_z)$ for some
$a,b\in Q(A)$ such that $ba_x,ba_y,ba_z\in A$, we say that $F$
is \emph{qm-exact}.

We shall say that a Poisson bracket on $B$ is \emph{exact}
 if the corresponding triple is exact and adopt a similar convention for
m-exact, cm-exact and qm-exact brackets.
\end{definition}

\begin{egs}
\label{cmegs} By \ref{baseprop}(4), any pair $a,b\in A$
determine an m-exact Poisson bracket on $A$. For examples of cm-exact
brackets, let us take $M$ to be the maximal ideal
$\langle x-1,y-1,z-1\rangle$ so that, with $w:=x-1$, $\wA$ contains formal
power series for $\log x=\log(1+w)=\sum_{m=1}^\infty
\frac{(-1)^{m-1}}{m} w^m$ and, for any $\alpha\in \C$, including
$\alpha=-1$, $x^\alpha=(1+w)^\alpha=\sum_{m=0}^\infty
\frac{\prod_{j=0}^{m-1}(\alpha-j)}{m!}w^m$. Similarly $\log y, \log
z, y^\alpha, z^\alpha\in \widehat A$. We can now identify two known
Poisson triples on $A$ as cm-exact.

\begin{enumerate}
\item Given $\rho, \sigma, \tau \in \C$, it is  well-known that
$(\rho yz, \sigma zx, \tau xy)$ is a Poisson triple on $A$. If
$\rho=\sigma=\tau$ this is the exact triple $\grad (\rho xyz)$
but in general it is the cm-exact triple \[xyz\grad(\rho \log
x+\sigma \log y+\tau \log z).\]

\item Let $\alpha\in \C$.  The Poisson triple $(y,-\alpha x, 0)$
features in \cite[Remark 3.6(1)]{BGn} and is discussed in detail in
\cite{goodsemiclass}. The special case $\alpha=1$ features in
\cite[Remark 3.2]{BGn}. This is the cm-exact triple
$y^{\alpha+1}\grad(xy^{-\alpha})$. Note that $z(y,-\alpha x, 0)$ is the special case of (1)
 with $\rho=1$, $\sigma=-\alpha$ and $\tau=0$ so $(y,-\alpha x, 0)$ has an alternative expression as a cm-exact triple
 $xy\grad(\log
x-\alpha \log y)$.
\end{enumerate}
\end{egs}

\begin{definitions} Two Poisson brackets $\{-,-\}_1, \{-,-\}_2$
on $A$ are said to be \emph{compatible}  if, for all $\lambda,\mu\in
\C$, the bracket $\lambda\{-,-\}_1+\mu\{-,-\}_2:A\times A\rightarrow
A$ on $A$ is a Poisson bracket. References for this definition include \cite{grabetal}.  Alternatively, two Poisson triples $F, G\in A^3$ are \emph{compatible} if, for all $\lambda,\mu\in \C$, $\lambda F+\mu G$ is a
Poisson triple.
Clearly any two exact Poisson triples on $A$ are compatible but the
same is not true for m-exact Poisson triples. The following
proposition gives a criterion, clearly not satisfied by $\grad(x)$
and $y\grad(z)$, for two m-exact or cm-exact Poisson triples to be
compatible.
\end{definitions}

\begin{prop}
\label{compat} Let $B=A$ or $Q(A)$ or $\wA$ and let $a,b,c,d\in A$ with $d\neq
0$. Then $c\grad a$ and $d\grad b$ are compatible if and only if
$|\Jac(a,b,cd^{-1})|=0$.
\end{prop}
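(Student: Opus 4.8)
The plan is to reduce compatibility to a single bilinear identity via the curl criterion of Proposition~\ref{baseprop}(1), and then to evaluate that identity explicitly using the vector-calculus formulae \eqref{curlgrad}--\eqref{tripledot}. Write $F=c\grad a$ and $G=d\grad b$; both are Poisson triples on $B$ by Proposition~\ref{baseprop}(4). Since $\curl$ is $\C$-linear, for $\lambda,\mu\in\C$ we have $\curl(\lambda F+\mu G)=\lambda\curl F+\mu\curl G$, so
\[(\lambda F+\mu G)\cdot\curl(\lambda F+\mu G)=\lambda^2(F\cdot\curl F)+\lambda\mu(F\cdot\curl G+G\cdot\curl F)+\mu^2(G\cdot\curl G).\]
By Proposition~\ref{baseprop}(1) applied to the Poisson triples $F$ and $G$, the $\lambda^2$ and $\mu^2$ terms vanish. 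Hence, again by Proposition~\ref{baseprop}(1), $\lambda F+\mu G$ is a Poisson triple for all $\lambda,\mu$ (that is, $F$ and $G$ are compatible) if and only if $F\cdot\curl G+G\cdot\curl F=0$. This is the identity I must now evaluate.

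For the evaluation I would first compute the curls. Applying \eqref{curlmult} together with \eqref{curlgrad} gives $\curl F=\curl(c\grad a)=-\grad a\times\grad c$ and $\curl G=-\grad b\times\grad d$. Substituting, the target becomes
\[F\cdot\curl G+G\cdot\curl F=-c\,\grad a\cdot(\grad b\times\grad d)-d\,\grad b\cdot(\grad a\times\grad c).\]
Each scalar triple product is converted into a Jacobian determinant by \eqref{tripledot}; rewriting these, using the antisymmetry of the determinant in its rows (a transposition for the first, an even cyclic permutation for the second), turns the right-hand side into $c\,|\Jac(a,b,d)|-d\,|\Jac(a,b,c)|$.

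Finally I would connect this to the stated criterion by the quotient rule. Expanding $\grad(cd^{-1})=d^{-2}(d\grad c-c\grad d)$ and using multilinearity of the determinant in its third row gives
\[|\Jac(a,b,cd^{-1})|=d^{-2}\bigl(d\,|\Jac(a,b,c)|-c\,|\Jac(a,b,d)|\bigr),\]
so that $F\cdot\curl G+G\cdot\curl F=-d^2\,|\Jac(a,b,cd^{-1})|$. Since $B$ is an integral domain and $d\neq0$, the factor $-d^2$ is a non-zero-divisor, and the vanishing of the left-hand side is equivalent to $|\Jac(a,b,cd^{-1})|=0$, as required.

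I expect the only real obstacle to be clerical rather than conceptual: keeping the signs and row-orderings of the Jacobian determinants consistent when passing between the scalar triple products of \eqref{tripledot} and the determinant $|\Jac(a,b,cd^{-1})|$, and handling the quotient rule for $\grad(cd^{-1})$ correctly in $Q(A)$. The domain hypothesis on $B$ (needed to cancel $d^2$) and the assumption $a,b,c,d\in A$ ensure that all intermediate quantities are well defined.
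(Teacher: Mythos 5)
Your proof is correct and follows essentially the same route as the paper: both reduce compatibility via Proposition~\ref{baseprop}(1) to the vanishing of the mixed term, evaluate it as $c\,|\Jac(a,b,d)|-d\,|\Jac(a,b,c)|$ using \eqref{curlgrad}, \eqref{curlmult} and \eqref{tripledot}, and conclude with the quotient rule and the domain property of $B$. The only cosmetic difference is that you eliminate the $\lambda^2$ and $\mu^2$ terms by citing Proposition~\ref{baseprop}(4) together with (1), whereas the paper applies \eqref{vectororth} directly to $\lambda c\grad(a)+\mu d\grad(b)$, and the paper phrases the final step through the derivation $\delta=|\Jac(a,b,-)|$ rather than through $\grad(cd^{-1})$.
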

\begin{proof}
Let $\lambda, \mu\in \C$ and let $F=\lambda c\grad(a)+\mu d\grad(b)$. Note that, by \eqref{curlmult} and
\eqref{curlgrad},
\[\curl F
=\lambda \grad(c)\times \grad(a)+\mu \grad(d)\times \grad(b).\]

By \eqref{vectororth},
\[\lambda c\grad(a).(\grad(\lambda c)\times \grad(a))=0=\mu d\grad(b).(\grad(\mu d)\times \grad(b)).\]
Hence
\begin{align*}
F.\curl F= &\lambda c\grad(a).(\grad(\mu d)\times
\grad(b))+\mu d\grad(b).(\grad(\lambda c)\times \grad(a))
\\=&\lambda\mu(c|\Jac(a,b,d)|+d|\Jac(b,a,c)|)\\
=&\lambda\mu(c|\Jac(a,b,d)|-d|\Jac(a,b,c)|)\\
=&\lambda\mu(c\delta(d)-d\delta(c)),
\end{align*}
where $\delta$ is the derivation $|\Jac(a,b,-)|:B\rightarrow B$,
which extends to the quotient field of $B$ with
$\delta(cd^{-1})=(d\delta(c)-c\delta(d))/d^2$. By
Proposition~\ref{baseprop}(1), $F$ is a Poisson triple if and only
if $\lambda\mu\delta(cd^{-1})=0$. The result follows.
\end{proof}
\begin{cor}
\label{compatcor} Let $B=A$ or $Q(A)$ or $\wA$ and let $a,b,c\in B$. Then  $c\grad a$ and $\grad b$
are compatible if and only if $|\Jac(a,b,c)|=0$.
\end{cor}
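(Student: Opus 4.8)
The plan is to obtain the corollary directly from Proposition~\ref{compat} by specialising to the case $d=1$. First I would observe that, although Proposition~\ref{compat} is stated with $a,b,c,d\in A$, its proof appeals only to the identities \eqref{curlmult}, \eqref{curlgrad} and \eqref{vectororth}, to the fact that $|\Jac(a,b,-)|$ is a derivation of $B$ extending to the quotient field of $B$, and to Proposition~\ref{baseprop}(1). All of these are available over $B=A$, $Q(A)$ or $\wA$, so the statement and proof of Proposition~\ref{compat} remain valid verbatim for $a,b,c,d\in B$ with $d\neq 0$. This preliminary remark is what lets me start from arbitrary $a,b,c\in B$ as in the corollary rather than from elements of $A$.

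With this in hand I would take $d=1\in B$, which is a nonzero element. Then $d\grad b=\grad b$ and $cd^{-1}=c$, so the compatibility criterion $|\Jac(a,b,cd^{-1})|=0$ furnished by Proposition~\ref{compat} reads precisely $|\Jac(a,b,c)|=0$. This is exactly the assertion of the corollary, so no further computation is required.

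The one point deserving a moment's care is that the substitution $d=1$ does not degenerate the argument through any division by $d$. In the notation of the proof of Proposition~\ref{compat}, the decisive quantity is $\lambda\mu\,\delta(cd^{-1})$ with $\delta=|\Jac(a,b,-)|$; putting $d=1$ gives $\delta(cd^{-1})=\delta(c)=|\Jac(a,b,c)|$, using $\delta(1)=|\Jac(a,b,1)|=0$ since the last row of $\Jac(a,b,1)$ is $(0,0,0)$, consistent with $\delta$ being a derivation. Thus the only genuine step is the domain extension of the preceding paragraph; the specialisation itself presents no obstacle, and the corollary follows immediately.
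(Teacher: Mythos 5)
Your proof is correct and is essentially identical to the paper's, which reads in full: ``This is immediate from Theorem~\ref{compat} on setting $d=1$.'' Your additional care in extending Proposition~\ref{compat} from $a,b,c,d\in A$ to $a,b,c,d\in B$ is a reasonable touch -- the paper's proposition is stated for elements of $A$ while the corollary allows elements of $B$, and as you observe the proof of the proposition works verbatim over $B$ -- but this does not change the route, which is the same specialisation $d=1$.
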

\begin{proof}
This is immediate from Theorem~\ref{compat} on setting $d=1$.
\end{proof}
\begin{remark}
Given their different roles, the symmetry in $a,b$ and $c$ in the Jacobian criterion in
Corollary~\ref{compatcor} is noteworthy. We would be interested to see a geometric
explanation or interpretation of this symmetry.
\end{remark}

\begin{remark} Donin and Makar-Limanov \cite{dml} have classified
homogeneous
Poisson triples on $A$ of degree two. Their classification includes
the cm-exact triples
\[xyz\grad(\rho \log x+\sigma \log y+\tau \log z)\] considered in
\ref{cmegs}(1) and the exact
triples $\grad f$, where $f\in A$ is
a homogeneous cubic. We shall see in the next section that all Poisson triples on $A$ are cm-exact
and so of the form $b\grad a$ for some $a,b\in \wA$. However appropriate elements $a$ and $b$ of $\wA$
are not always readily identifiable. All the triples in the classification in \cite{dml} are sums of
compatible triples $\grad b+c\grad a$ where $b,c\in A$ and $a\in Q(A)$ or $\wA$ and $a, b, c$ can be identified.
Examples, with parameters $\alpha,\delta,\lambda\in \C$, include
$\grad(\alpha xyz+\delta x^3)-yz\grad (\lambda)a$, $\grad (zk)+\lambda y^2z\grad(x/y)$, for any homogeneous quadratic $k\in A$, and
$\grad(\gamma x^2z)-\lambda x^2z\grad((y/x)+\log(z)/2)$.
\end{remark}

\section{Poisson triples on $A$}
In this section, we show that every Poisson triple on $A$ is
cm-exact. This is an algebraic analogue of \cite[Theorem
5]{grabetal} and depends on the following technical details for the
completion $\wA=\C[[x,y,z]]$ of $A$ at the maximal ideal $xA+yA+zA$.

\begin{notn}
Let $f\in\wA$ and let $i,j,k\in \N_0$. We shall denote by $f_{ijk}$
the coefficient of $x^iy^jz^k$; thus $f=\sum_{i,j,k=0}^\infty
f_{ijk}x^iy^jz^k$. If $F=(f,g,h)$ is a Poisson triple in $\wA$ then, in this notation,
Proposition~\ref{baseprop}(1) becomes
\begin{multline}\label{powerPB}0=
\left(\sum f_{abc}x^ay^bz^c\right)\left(\sum sh_{rst}x^ry^{s-1}z^t-\sum tg_{rst}x^ry^sz^{t-1}\right)\\
+\left(\sum g_{abc}x^ay^bz^c\right)\left(\sum tf_{rst}x^ry^sz^{t-1}-\sum rh_{rst}x^{r-1}y^sz^{t}\right)\\
+\left(\sum h_{abc}x^ay^bz^c\right)\left(\sum
rg_{rst}x^{r-1}y^{s}z^t-\sum sf_{rst}x^ry^{s-1}z^{t}\right).
\end{multline}
\end{notn}

\begin{lemma}\label{restdetermined}
Let $F=(f,g,h)$ and $F^\prime=(f^\prime,g^\prime,h^\prime)$ be
Poisson triples on $\wA$ with $f_{000}\neq 0$. If
$f_{ijk}=f^\prime_{ijk}$ and $h_{ijk}=h^\prime_{ijk}$ for all
$i,j,k\in \N_0$ and $g_{ij0}=g^\prime_{ij0}$ for all $i,j\in \N_0$
then $F=F^\prime$.
\end{lemma}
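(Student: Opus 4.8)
The plan is to exploit the Poisson condition $F\cdot\curl F=0$ from Proposition~\ref{baseprop}(1) to derive a single first-order linear equation satisfied by the difference $\gamma:=g-g'$, and then to solve this equation coefficient-by-coefficient in the powers of $z$, using the hypothesis $f_{000}\neq 0$ to invert a leading coefficient.

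First I would write out the Poisson condition for each triple. For $F$, Proposition~\ref{baseprop}(1) gives
\[f(h_y-g_z)+g(f_z-h_x)+h(g_x-f_y)=0,\]
and similarly for $F'$. Since $f=f'$ and $h=h'$, subtracting the two identities and setting $\gamma=g-g'$ yields the homogeneous equation
\[f\gamma_z=h\gamma_x+\gamma(f_z-h_x).\]
The hypotheses $g_{ij0}=g'_{ij0}$ say precisely that the part of $\gamma$ involving no power of $z$ vanishes.

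To solve this, I would regard $\wA=\C[[x,y,z]]$ as $\C[[x,y]][[z]]$ and group by powers of $z$, writing $\gamma=\sum_{k\ge 0}\gamma^{(k)}z^k$, $f=\sum_{k\ge 0}f^{(k)}z^k$ and $h=\sum_{k\ge 0}h^{(k)}z^k$ with all coefficients in $\C[[x,y]]$; the remark just made becomes $\gamma^{(0)}=0$. Comparing the coefficients of $z^{k-1}$ on the two sides of the equation, the left side contributes
\[\sum_{n=1}^{k}n\,f^{(k-n)}\gamma^{(n)}=k\,f^{(0)}\gamma^{(k)}+\sum_{n=1}^{k-1}n\,f^{(k-n)}\gamma^{(n)},\]
while every term on the right side involves only $\gamma^{(n)}$ and $\gamma^{(n)}_x$ with $n\le k-1$. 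Thus the coefficient of $z^{k-1}$ expresses $k\,f^{(0)}\gamma^{(k)}$ purely in terms of $\gamma^{(0)},\dots,\gamma^{(k-1)}$ and their $x$-derivatives.

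The final step is induction on $k$. Because $f_{000}\neq 0$, the constant term of $f^{(0)}\in\C[[x,y]]$ is non-zero, so $f^{(0)}$ is a unit in $\C[[x,y]]$; as $k\ge 1$, the factor $k\,f^{(0)}$ is invertible and may be cancelled. Starting from $\gamma^{(0)}=0$, the displayed relation then forces $\gamma^{(k)}=0$ as soon as $\gamma^{(0)}=\cdots=\gamma^{(k-1)}=0$, and hence $\gamma=0$, that is $g=g'$ and $F=F'$. The only delicate point is the bookkeeping in the coefficient comparison: one must verify that $\gamma^{(k)}$ occurs on the left exactly once, with the invertible coefficient $k\,f^{(0)}$, and does not occur at all on the right. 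This triangularity is the crux of the argument, and it is exactly where the assumption $f_{000}\neq 0$ is used; the remaining manipulations are routine power-series algebra.
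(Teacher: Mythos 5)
Your proof is correct and follows essentially the same route as the paper's: both subtract the identity $F\cdot\curl F=0$ for the two triples to isolate a relation linear in $\gamma=g-g'$, and both exploit the leading term $kf^{(0)}\gamma^{(k)}$ (in the paper, $kf_{000}(g_{ijk}-g'_{ijk})$) at each $z$-degree, which is invertible because $f_{000}\neq 0$. The only difference is bookkeeping: the paper argues by minimal counterexample over a right-lexicographic well-ordering of all monomials $x^iy^jz^k$, whereas you induct on the $z$-degree alone, viewing $\wA$ as $\C[[x,y]][[z]]$ and using that $f^{(0)}$ is a unit of $\C[[x,y]]$ to absorb the same-degree cross terms in one step.
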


\begin{proof} Order
the monomials $x^iy^jz^k$ lexicographically from the right so that,
for example, $z^2>y^2z>x^2y.$ It suffices to show that
$g_{ijk}=g^\prime_{ijk}$ whenever $k\neq 0$. Suppose that this is
not the case and choose the least monomial $x^iy^jz^k$ for which $g_{ijk}\neq
g^\prime_{ijk}$. By the hypothesis and the choice of $x^iy^jz^k$, subtracting \eqref{powerPB} for $F^\prime$
 from \eqref{powerPB} for $F$ and taking coefficients of $x^iy^jz^{k-1}$ gives
 \[kf_{000}(g_{ijk}-g^\prime_{ijk})=0.\]
As $kf_{000}\neq 0$, this contradicts the choice of $x^iy^jz^k$.
\end{proof}

\begin{lemma}\label{compmJac}
Let $(f,g,h)$ be a Poisson triple in $\wA$  such that at most one of
$f, g$ and $h$ is in $\wM$. Then $F$ is m-exact.
\end{lemma}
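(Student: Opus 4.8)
The plan is to prove that a Poisson triple $F=(f,g,h)$ on $\wA=\C[[x,y,z]]$ with at most one of $f,g,h$ in $\wM$ is m-exact, that is, of the form $b\grad a$ for some $a,b\in\wA$. By symmetry (permuting the roles of $x,y,z$ cyclically, which permutes the components of $F$), I may assume without loss of generality that $f\notin\wM$, i.e.\ $f_{000}\neq 0$, so that $f$ is a unit in the local ring $\wA$. This unit assumption is what makes the whole construction work.

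First I would produce a candidate exact triple $\grad a$ and a candidate multiplier $b$ that match $F$ on as much data as possible, and then invoke Lemma~\ref{restdetermined} to conclude they agree everywhere. Concretely, write $b:=f$, so that I am seeking $a\in\wA$ with $b\grad a=(ba_x,ba_y,ba_z)=(f,g,h)$; this forces $a_x=1$ and $a_y=g/f$, $a_z=h/f$ (the latter two make sense since $f$ is a unit). To find such an $a$ I would integrate: set $a_x=1$ and integrate formally in $x$, then adjust the $x$-free part to match $a_y$ and $a_z$. The obstruction to the existence of a genuine potential $a$ is precisely that $\grad a$ must have vanishing curl, so I must check $\curl(f^{-1}F)=0$, equivalently that the mixed partials are consistent. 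Here is where I expect the main work: a priori $\curl(f^{-1}F)$ need not vanish, but the hypothesis that $F$ is a Poisson triple gives $F.\curl F=0$ by Proposition~\ref{baseprop}(1), and combined with $f_{000}\neq 0$ this should force enough of the integrability conditions to hold.

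The cleaner route, and the one I would actually carry out, is to build $a$ and $b$ so that the resulting triple $b\grad a$ and the given $F$ share exactly the data that Lemma~\ref{restdetermined} requires to be equal — namely all coefficients $f_{ijk}$, all $h_{ijk}$, and the coefficients $g_{ij0}$ (the part of $g$ with no $z$). So I would take $b=f$ and construct $a$ by first integrating $a_z=h/f$ in $z$ to nail down the $z$-dependent part (forcing the third component to equal $h$ exactly), then choosing the $z$-free part of $a$ so that $f a_y$ agrees with $g$ in all coefficients $g_{ij0}$, i.e.\ modulo $z$. Since $f$ is a unit, these integrations and matchings can be done coefficient by coefficient in the power series ring, with no convergence issues. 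By construction $b a_x$, $ba_y$, $ba_z$ will be a Poisson triple (it is m-exact, hence a Poisson triple by Proposition~\ref{baseprop}(4)) agreeing with $F$ in the first and third components fully and in the second component modulo $z$, with leading coefficient $f_{000}\neq 0$.

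The main obstacle is verifying that such an $a$ genuinely exists in $\wA$ — that the formal integrations close up — and I anticipate this is exactly where the Poisson (Jacobi) condition $F.\curl F=0$ must be fed in, rather than being automatic from the unit hypothesis alone. Once $a$ is produced, Lemma~\ref{restdetermined} applies to $F$ and $F^\prime:=b\grad a$: they satisfy $f_{ijk}=f^\prime_{ijk}$ and $h_{ijk}=h^\prime_{ijk}$ for all $i,j,k$ and $g_{ij0}=g^\prime_{ij0}$ for all $i,j$, and $f_{000}\neq 0$, so the lemma forces $F=F^\prime=b\grad a$, giving that $F$ is m-exact. The case where one of $g,h$ is the unit instead of $f$ follows by the cyclic symmetry of the construction in \ref{defineFbracket}, which rotates $(x,y,z)$ and $(f,g,h)$ together and preserves the notion of being m-exact.
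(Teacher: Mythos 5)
Your overall strategy --- manufacture an m-exact triple agreeing with $F$ in exactly the data that Lemma~\ref{restdetermined} controls, then invoke that lemma to get full equality --- is indeed the paper's strategy, and you are right that this is where the Poisson hypothesis on $F$ enters. But there is a genuine gap in your construction: you fix the multiplier $b=f$ in advance, leaving the potential $a$ as the only unknown. Integrating $a_z=h/f$ in $z$ determines $a$ up to a $z$-free function $c(x,y)$, and matching $fa_y$ with $g$ modulo $z$ then determines $c$ up to a function of $x$ alone. At that point the first component $fa_x$ of your triple is essentially forced, and nothing in the construction makes it equal to $f$ --- yet Lemma~\ref{restdetermined} requires the first components to agree in \emph{all} coefficients (this is also where its hypothesis $f_{000}\neq 0$ lives), so the lemma never becomes applicable. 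This cannot be repaired within your normalization: $F=f\grad a$ forces the integrability condition $\curl(f^{-1}F)=0$, which is strictly stronger than the Poisson condition $F.\curl F=0$. Concretely, $F=\grad(x+y+x^2y)=(1+2xy,\;1+x^2,\;0)$ is a Poisson triple (it is exact, so m-exact with $b=1$) satisfying the hypothesis of the lemma, but $\curl(f^{-1}F)=(0,0,\partial_x((1+x^2)(1+2xy)^{-1}))\neq 0$, so no $a\in\wA$ satisfies $F=f\grad a$; if you run your construction on this $F$, the first component of the resulting triple differs from $f$ in the coefficient of $y^2$, whatever value you assign to the residual free function of $x$. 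Your first, discarded route founders on exactly the same point, so the difficulty is not one you can defer to ``where the Jacobi identity gets fed in'': under the normalization $b=f$ the statement you would need is simply false.

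The paper's proof avoids this by leaving \emph{both} the multiplier and the potential unknown. Writing $b=\sum b_{ijk}x^iy^jz^k$ and $d=\sum d_{ijk}x^iy^jz^k$, it imposes $bd_x=f$ and $bd_z=h$ in all coefficients and $bd_y=g$ in the $z$-free coefficients, and solves this coupled quadratic system coefficient-by-coefficient by induction on a weight, with a well-ordering under which each equation determines exactly one new unknown (the pivots being $d_{100}:=1$ and $b_{000}=f_{000}\neq 0$); one unknown per weight remains free. The extra unknown power series $b$ is precisely the freedom your construction lacks: with it, matching two components fully and one modulo $z$ is possible, whereas with $b=f$ frozen you can only match one component fully and one modulo $z$. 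Note also that in the paper the Jacobi condition plays no role in building $b$ and $d$; it is used only at the very end, through Lemma~\ref{restdetermined}, to promote the partial agreement to the identity $F=b\grad d$.
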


\begin{proof}
Let $\{-,-\}$ be the Poisson bracket determined by the triple
$(f,g,h)$. Without loss of generality, we can assume that $f,g\notin
\wM$ and hence that $f_{000}\neq 0\neq g_{000}$. We seek
$b=\sum_{i,j,k=0}^\infty b_{ijk}x^iy^jz^k$ and
$d=\sum_{i,j,k=0}^\infty d_{ijk}x^iy^jz^k$ such that the following
equations are satisfied
\[
bd_x=f;\quad
bd_y=g;\quad
bd_z=h.\]
Equating coefficients of $x^iy^jz^k$, in the equations for $f$ and $h$,  and of $x^iy^j$
in the equation for $g$, we obtain
\begin{eqnarray}
\label{f}\quad f_{ijk}&=&\sum rb_{(i-r+1)(j-s)(k-t)}d_{rst},\quad 1\leq r\leq i+1, 0\leq s\leq j, 0\leq t\leq k;\\
\label{g}\quad g_{ij0}&=&\sum sb_{(i-r)(j-s+1)0}d_{rs0},
\quad 0\leq r\leq i, 1\leq s\leq j+1;\\
\label{h}\quad h_{ijk}&=&\sum
tb_{(i-r)(j-s)(k-t+1)}d_{rst}\quad 0\leq r\leq i, 0\leq s\leq j, 1\leq t\leq k+1.
\end{eqnarray}
To distinguish particular instances of these, we shall refer to \eqref{f}$_{ijk}$, \eqref{g}$_{ij0}$ and \eqref{h}$_{ijk}$.
Regard these as countably many quadratic equations in indeterminates $b_{ijk}$ and
$d_{ijk}$ with coefficients determined by the coefficients of $f, g$
and $h$. Our first aim is to show that there is a simultaneous solution to these equations.

We assign \emph{weights} to the indeterminates $b_{ijk}$ and
$d_{ijk}$ and to the equations \eqref{f}$_{ijk}$,
\eqref{g}$_{ij0}$ and \eqref{h}$_{ijk}$ as follows: $b_{ijk}$ has
weight $i+j+k$, $d_{ijk}$ has weight $i+j+k-1$, \eqref{f}$_{ijk}$ and \eqref{h}$_{ijk}$ have weight $i+j+k$
and
\eqref{g}$_{ij0}$ has weight $i+j$.

We next define a well-ordering $\rightsquigarrow$ on
the indeterminates. Let $m_1$ and $m_2$ be indeterminates with weights $w_1$ and $w_2$  respectively. If
$w_1<w_2$ then $m_1\squi m_2$ and if $w_2<w_1$ then $m_2\squi m_1$.
Now suppose that $w_1=w_2=w$. If $m_1=b_{ijk}$ and $m_2=b_{abc}$, where $i+j+k=w=a+b+c$ then $m_1\squi m_2$ if $i>a$ or if
$i=a$ and $j>b$, otherwise $m_1\squi m_2$. Similarly, if $m_1=d_{ijk}$ and $m_2=d_{abc}$, where $i+j+k=w+1=a+b+c$ then $m_1\squi m_2$ if $i>a$ or if
$i=a$ and $j>b$, otherwise $m_1\squi m_2$. Finally, if $m_1=b_{ijk}$ and $m_2=d_{abc}$, where $i+j+k=w=a+b+c-1$
then $m_2\rightsquigarrow m_1$
if and only if $i>a$. For example, when $w=2$, we have
\begin{multline*}d_{300}\squi b_{200}\squi d_{210}\squi d_{201}\squi
b_{110}\squi b_{101}\squi d_{120}\squi d_{111}\squi d_{102}\\\squi
b_{020}\squi b_{011}\squi b_{002}\squi d_{030}\squi d_{021}\squi
d_{012}\squi d_{003}.\end{multline*}

The indeterminate $d_{000}$ does
not feature in the partial derivatives of $d$ and can take
any value.
The indeterminates of weight $0$ are $d_{100}\squi b_{000}\squi d_{010}\squi d_{001}$. Set
$d_{100}=1$. By \eqref{f}$_{000}$, \eqref{g}$_{000}$ and \eqref{h}$_{000}$,
\[b_{000}=f_{000}\neq 0,\quad d_{010}=g_{000}f_{000}^{-1}\neq 0,\;\text{ and }d_{001}=h_{000}f_{000}^{-1}.\]
All the indeterminates of weight $0$ have now been determined in
such that a way that all the equations of weight $0$ are satisfied.
We now proceed by induction. Let $w>0$ and suppose that values have
been determined for all indeterminates of weight $<w$ in such that a
way that all the equations of weight $<w$ are simultaneously satisfied.

On the right hand side of each of the equations \eqref{f}, \eqref{g}, \eqref{h} of weight $w$,
each summand is a product of two indeterminates the sum of whose weights is $w$. All but two of the terms
involve only indeterminates of weight $<w$ and values for these have been
determined. Therefore we can rewrite the
equations of weight $w$ as
\begin{eqnarray}
\label{ff}f_{ijk}&=&(i+1)b_{000}d_{(i+1)jk}+b_{ijk}d_{100}+\alpha;\\
\label{gg}g_{ij0}&=&(j+1)b_{000}d_{i(j+1)0}+b_{ij0}d_{010}+\beta;\\
\label{hh}h_{ijk}&=&(k+1)b_{000}d_{ij(k+1)}+b_{ijk}d_{001}+\gamma,
\end{eqnarray}
where the values of $\alpha,\beta,\gamma\in \C$ have been determined. We assign an arbitrary value, say $0$, to
$d_{(w+1)00}$, which is the first indeterminate of weight $w$
under $\squi$. We then assign values to the other indeterminates of
weight $w$ inductively, using $\squi$. We apply \eqref{ff}$_{ijk}$
and the fact that $d_{100}\neq 0$ to determine $b_{ijk}$, given
that $d_{(i+1)jk}\squi b_{ijk}$. For $j=w-i$, we apply \eqref{gg}$_{ij0}$ and
the fact that $b_{000}\neq 0$ to determine $d_{i(j+1)0}$, given that
$b_{ij0}\squi d_{i(j+1)0}  $. For $k\neq 0$ and  $j=w-i-k$, we apply \eqref{hh}$_{ijk}$ and the fact that $b_{000}\neq 0$ to determine
$d_{ij(k+1)}$, given $b_{ijk}$. Note that, in this process, each equation \eqref{ff}$_{ijk}$ or
\eqref{gg}$_{ij0}$ or \eqref{hh}$_{ijk}$ is associated with the determination of a unique indeterminate
and so the equations of weight $w$ inherit the well-ordering given by $\squi$.
In this way values are determined for all
the indeterminates of weight $w$ in such a way that all equations of
weight $w$ are simultaneously satisfied and these are consistent with the solutions to equations of lower weight. By induction, values are determined for all
the indeterminates in such a way that all the equations \eqref{ff}$_{ijk}$, \eqref{gg}$_{ijk}$and \eqref{hh}$_{ijk}$  are simultaneously satisfied.

Let $b=\sum_{i,j,k=0}^\infty b_{ijk}x^iy^jz^k$ and
$d=\sum_{i,j,k=0}^\infty d_{ijk}x^iy^jz^k$.
Applying Lemma~\ref{restdetermined} to
$\{-,-\}$ and the m-exact bracket $b \grad d$, we see that $\{-,-\}=b \grad d$.
\end{proof}

\begin{theorem}\label{allcm}
Every Poisson triple $F=(f,g,h)$ on $\C[x,y,z]$ is cm-exact.
\end{theorem}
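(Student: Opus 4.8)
The plan is to reduce the theorem to Lemma~\ref{compmJac} by choosing the maximal ideal $M$ cleverly and then translating coordinates so that the completion becomes $\C[[x,y,z]]$ based at the origin. To prove that $F$ is cm-exact we are free to select \emph{any} maximal ideal $M$ of $A$ and then exhibit $a,b$ in the $M$-adic completion $\wA$ with $ba_x=f$, $ba_y=g$, $ba_z=h$; since $f,g,h\in A$ by hypothesis, the integrality requirement $ba_x,ba_y,ba_z\in A$ in the definition of cm-exact is automatic. Lemma~\ref{compmJac} produces exactly such $a,b$ for the completion at the origin, provided that at most one of $f,g,h$ lies in $\wM$, i.e. at most one of the constant terms $f_{000},g_{000},h_{000}$ vanishes. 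So everything comes down to locating a point $p\in\C^3$ at which at most one of $f,g,h$ vanishes.

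The key step is a genericity argument resting on the irreducibility of $\C^3$. Suppose, for contradiction, that no such point exists, so that every $p\in\C^3$ lies in the closed set
\[(V(f)\cap V(g))\cup(V(g)\cap V(h))\cup(V(h)\cap V(f)),\]
the locus where at least two of $f,g,h$ vanish. Since $\C^3$ is irreducible, it must coincide with one of these three closed subsets, say $V(f)\cap V(g)=\C^3$; as $\C$ is algebraically closed this forces $f=g=0$ identically. Contrapositively, as soon as at least two of $f,g,h$ are nonzero polynomials, a point $p$ at which at most one of $f,g,h$ vanishes must exist. The remaining degenerate cases, in which at least two of $f,g,h$ are identically zero, are disposed of directly: if $g=h=0$ then $F=(f,0,0)=f\grad(x)$ is m-exact on $A$ itself, hence cm-exact (take $a=x$, $b=f$ and any $M$), the zero triple is trivially m-exact, and the other positions are symmetric.

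With a good point $p=(\alpha,\beta,\gamma)$ in hand, I would take $M=\langle x-\alpha,y-\beta,z-\gamma\rangle$ and pass to $\wA=\C[[x-\alpha,y-\beta,z-\gamma]]$. The translation $x\mapsto x-\alpha$, and similarly in $y,z$, is a $\C$-algebra isomorphism onto $\C[[x,y,z]]$ intertwining $\partial_x,\partial_y,\partial_z$; it therefore carries $\grad$, $\curl$ and the Poisson-triple condition $F.\curl F=0$ of Proposition~\ref{baseprop}(1) to their origin-based counterparts, and sends the property of vanishing at $p$ to that of having zero constant term. Thus the image of $F$ is a Poisson triple at the origin at most one of whose entries lies in $\wM$, and Lemma~\ref{compmJac} supplies $a,b$ with $F=b\grad a$. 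Transporting the identity back along the inverse isomorphism gives $f=ba_x$, $g=ba_y$, $h=ba_z$ with $a,b\in\wA$, which is precisely the assertion that $F$ is cm-exact. The main obstacle is securing the point $p$; once the irreducibility argument delivers it and the degenerate cases are separated off, the coordinate translation and the appeal to Lemma~\ref{compmJac} are routine.
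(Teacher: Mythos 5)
Your proof is correct and follows essentially the same route as the paper: dispose of the degenerate case where two of $f,g,h$ vanish via $(f,0,0)=f\grad x$, locate a point of $\C^3$ at which at most one of $f,g,h$ vanishes, translate that point to the origin, and apply Lemma~\ref{compmJac} in $\C[[x,y,z]]$. The only difference is cosmetic: where you invoke irreducibility of $\C^3$ to produce the point, the paper simply notes that (WLOG) $f\neq 0\neq g$, so $fg\neq 0$ and some maximal ideal $\langle x-\alpha,y-\beta,z-\gamma\rangle$ fails to contain $fg$.
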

\begin{proof}
If two of $f,g,h$ are $0$ then $F$ is m-exact, for example
$(f,0,0)=f\grad x$. So we may assume that at least two of $f,g,h$
are non-zero. Without loss of generality, $f\neq 0$ and $g\neq 0$.
Let $M=\langle x-\alpha,y-\beta,z-\gamma\rangle$ be a maximal ideal of
$\C[x,y,z]$ such that $fg\notin M$. Changing generators, we may
assume that $\alpha=\beta=\gamma=0$. Passing to the completion
$\C[[x,y,z]]$, where $f\notin\wM$ and $g\notin\wM$, and applying
Lemma~\ref{compmJac}, we see that $F$ is cm-exact.
\end{proof}

\section{Poisson spectra for exact and $\textrm{qm}$-exact brackets}
The aim of this section is to determine the Poisson prime and primitive ideals of $A$
for a class of qm-exact Poisson brackets including the
exact brackets. The following two general results will apply.

\begin{lemma}
Let $R$ be a commutative noetherian $\C$-algebra $R$ that is a domain and let $\delta$ be a $\C$-derivation of $R$. Let
$K$ denote the subring of constants, that is $K=\{r\in R:\delta(r)=0\}$. Then $K$ is integrally closed in $R$.\label{intclosed}
\end{lemma}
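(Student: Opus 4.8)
The plan is to use the classical ``differentiate the minimal integral relation'' argument, the only non-formal ingredient being that $R$, as a $\C$-algebra, has characteristic zero. First I would take an arbitrary element $r\in R$ that is integral over $K$ and fix a monic relation
\[ r^{n}+k_{n-1}r^{n-1}+\dots+k_{1}r+k_{0}=0,\qquad k_{0},\dots,k_{n-1}\in K, \]
with $n$ minimal; the goal is to show $\delta(r)=0$, that is, $r\in K$.

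Next I would apply $\delta$ to this identity. Since each $k_{i}\in K$ satisfies $\delta(k_{i})=0$ and since $\delta$ is a $\C$-derivation, so that $\delta(r^{m})=m\,r^{m-1}\delta(r)$, the derivative of the relation collapses to
\[ \bigl(n\,r^{n-1}+(n-1)k_{n-1}r^{n-2}+\dots+k_{1}\bigr)\,\delta(r)=0. \]
Writing $p(X)=X^{n}+k_{n-1}X^{n-1}+\dots+k_{0}$ for the chosen polynomial, the bracketed factor is precisely the value $p'(r)$ of the formal derivative, so the identity reads $p'(r)\,\delta(r)=0$. Because $R$ is a domain, either $\delta(r)=0$, and we are done, or $p'(r)=0$.

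The heart of the argument, and the only place where the hypotheses really bite, is ruling out $p'(r)=0$. Here I would use that $n\cdot 1_{R}$ is invertible in $R$, which holds because $R\supseteq\C$ has characteristic zero; thus $n^{-1}p'(X)$ is a \emph{monic} polynomial over $K$ of degree $n-1$ having $r$ as a root. This contradicts the minimality of $n$ (when $n=1$ it degenerates to the absurdity $1=0$). Hence $p'(r)\neq 0$, forcing $\delta(r)=0$ and $r\in K$, so that $K$ is integrally closed in $R$. I expect no genuine obstacle beyond bookkeeping: the domain hypothesis is used exactly once, to split the product $p'(r)\,\delta(r)=0$, and characteristic zero is used exactly once, to normalise $p'$ and invoke minimality. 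Noetherianity does not appear to be needed for this particular statement.
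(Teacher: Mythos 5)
Your proof is correct and follows essentially the same route as the paper's: differentiate the minimal monic relation, use the domain hypothesis to split $p'(r)\,\delta(r)=0$, and use invertibility of $n$ (characteristic zero, with $n^{-1}\in\C\subseteq K$ since $\delta$ is a $\C$-derivation) to contradict minimality of $n$. Your closing observation that noetherianity is never used is also consistent with the paper's argument, which likewise makes no use of it.
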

\begin{proof}
Let $r\in R$ be integral over $K$ and let $n\geq 1$ be minimal such that there exist
$k_{n-1},\ldots, k_1,k_0\in K$ such that
\[r^n+k_{n-1}r^{n-1}+\ldots+k_1r+k_0=0.\] If $n=1$ then $r\in K$ so we may suppose that
$n>1$.
Then
\[0=\delta(r^n+k_{n-1}r^{n-1}+\ldots+k_1r+k_0)=\delta(r)(nr^{n-1}+(n-1)k_{n-1}r^{n-2}+\ldots+k_1).\]
As $R$ is a domain, either $\delta(r)=0$, and $r\in K$, or
$nr^{n-1}+(n-1)k_{n-1}r^{n-1}+\ldots+k_1=0$. As $n$ is invertible in $K$, the latter contradicts the minimality of $n$ so $r\in K$.
\end{proof}

\begin{prop}
Let $A$ be an affine Poisson algebra and let $P$ be a Poisson prime ideal of $A$ of codimension $d\leq 1$.
Then $P$ is residually null.\label{codimone}
\end{prop}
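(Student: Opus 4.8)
The plan is to show directly that the Poisson bracket induced on $A/P$ vanishes, which is exactly the assertion that $P$ is residually null. Since $P$ is prime, $A/P$ is an affine domain, and the codimension hypothesis says it has Krull dimension $d\le 1$; thus its fraction field $Q:=Q(A/P)$ has transcendence degree at most $1$ over $\C$. As recorded earlier, the Poisson bracket on $A/P$ extends to $Q$ by the quotient rule, and since $A/P\subseteq Q$ it suffices to prove that the extended bracket on $Q$ is identically zero. If the transcendence degree is $0$ then, as $\C$ is algebraically closed of characteristic $0$, one has $Q=\C$ and there is nothing to prove, so I assume the transcendence degree is exactly $1$.

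The key structural input is the module of derivations of $Q$. Because the characteristic is $0$ and $Q$ is a finitely generated field extension of $\C$, it is separably generated: there is an element $t\in Q$ with $Q$ finite separable over $\C(t)$. Consequently $\Der_{\C}(Q)$ is a one-dimensional vector space over $Q$, spanned by the unique derivation $\partial$ extending $d/dt$ (normalised so that $\partial(t)=1$); equivalently, every $\C$-derivation $\delta$ of $Q$ satisfies $\delta=\delta(t)\,\partial$, since a derivation is determined by its value on a separating element. I will apply this to the Hamiltonians $\{a,-\}$, each of which is a $\C$-derivation of $Q$.

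Now the antisymmetry of the bracket does the work. Applying the relation $\delta=\delta(t)\,\partial$ with $\delta=\{t,-\}$ gives $\{t,-\}=\{t,t\}\,\partial=0$, because $\{t,t\}=0$; hence $\{t,b\}=0$ for every $b\in Q$, and therefore $\{a,t\}=-\{t,a\}=0$ for every $a\in Q$. Feeding this back, $\{a,-\}=\{a,t\}\,\partial=0$ for all $a\in Q$, so the bracket on $Q$ is identically zero. Restricting to $A/P$ shows that the induced bracket there is zero, i.e.\ that $P$ is residually null.

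I expect the only real obstacle to be the derivation-module step: one must be sure that in transcendence degree $1$ over a field of characteristic $0$ the space of $\C$-derivations of $Q$ is free of rank one, so that each Hamiltonian is pinned down by its value on the single separating element $t$. Everything else is a formal consequence of antisymmetry, and it is worth noting that this route does not seem to require Lemma~\ref{intclosed}; that lemma presumably serves the finer analysis of rational and Poisson primitive ideals rather than this vanishing statement.
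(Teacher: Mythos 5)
Your proof is correct, but it takes a genuinely different route from the paper's. The paper stays inside the affine domain $B=A/P$: by Noether normalization it picks $w\in B$ with $B$ integral over $\C[w]$, proves (Lemma~\ref{intclosed}) that the constants of a derivation of a domain form an integrally closed subring in characteristic $0$, and then bootstraps exactly as you do --- first $\ham(w)$ kills $\C[w]$ and hence, by integrality, all of $B$; then antisymmetry puts $\C[w]$ inside the constants of every $\ham(b)$, and integrality again gives $B$ itself. You instead pass to the fraction field $Q=Q(A/P)$, which has transcendence degree at most $1$, and invoke the standard fact that in characteristic $0$ such a function field has $\Der_\C(Q)$ free of rank one over $Q$, spanned by $\partial=d/dt$ for a separating element $t$; antisymmetry then forces $\{t,-\}=\{t,t\}\,\partial=0$ and hence $\{a,-\}=\{a,t\}\,\partial=0$ for all $a$. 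The two arguments are structurally parallel, and indeed your separability step (differentiating a minimal polynomial and using $f'(u)\neq 0$) is essentially the same computation as the paper's Lemma~\ref{intclosed}; what your version buys is a more conceptual packaging --- the proposition becomes the statement that a field of transcendence degree $\le 1$ over $\C$ carries no nonzero Poisson bracket --- at the cost of importing the rank-one structure of the derivation module, whereas the paper's version is self-contained at the ring level with an elementary lemma. One side remark of yours is off: Lemma~\ref{intclosed} is used in the paper only to prove this proposition, not for the finer analysis of rational or Poisson primitive ideals, so your route would make that lemma dispensable altogether.
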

\begin{proof}
The result is obvious if $d=0$ so suppose that $d=1$. Let $B$ be the Poisson algebra $A/P$. It is a consequence of Noether's Normalization Theorem that there exists $w\in B$ such that $B$ is integral over $\C[w]$. For example, see \cite[14.28]{rys}. Let $b\in B$ and let $K$ and $K^\prime$ be, respectively, the subrings of constants of $\delta:=\ham w$ and $\delta^\prime:=\ham b$. Then $\C[w]\subseteq K$ so, by Lemma ~\ref{intclosed},
$B\subseteq K$. Hence $\C[w]\subseteq K^\prime$. By Lemma ~\ref{intclosed}, $B\subseteq K^\prime$.
Thus $P$ is residually null.
\end{proof}

Let $s,t\in A\backslash\{0\}$ be coprime and let $a=st^{-1}\in
Q(A)$. Then the m-exact Poisson bracket $t^2\{-,-\}_a$ on
$\C(x,y,z)$ restricts to a qm-exact Poisson bracket on $A$. For ease
of notation, we shall fix $s$ and $t$ and write this bracket simply
as $\{-,-\}$, referring to it as the \emph{qm-exact bracket determined by} $a$.
Thus \begin{equation}\label{qmb}
\{x,y\}=ts_z-st_z,\;\{y,z\}=ts_x-st_x \mbox{ and }\{z,x\}=ts_y-st_y.
\end{equation}
Note the symmetry here in the roles of $s$ and $t$. Using $-a^{-1}$ in place of $a$ yields the same Poisson
bracket on $A$. Taking $t=1$, we obtain the exact Poisson brackets
on $A$.

In the exact case, $a$ is Poisson central so $(a-\mu)A$ is a Poisson
ideal. In the next lemma, we see that, in general, the Poisson centrality of $a=st^{-1}$ in $Q(A)$ leads to a
family of principal Poisson ideals of $A$ and the minimal primes over these
are principal Poisson prime ideals.

\begin{notn}
For $(\lambda,\mu)\in \Proj$, let $f_{\lambda,\mu}=\lambda s-\mu t$.
\end{notn}

\begin{lemma}\label{htoneqmexact}
Let $(\lambda,\mu)\in \Proj$ be such that $f_{\lambda,\mu}$ is a non-zero non-unit and let $u$ be an irreducible factor in $A$ of $f_{\lambda,\mu}$.
The ideal $f_{\lambda,\mu}A$ is
Poisson and $uA$ is a Poisson prime ideal of $A$.
\end{lemma}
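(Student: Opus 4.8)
The plan is to establish the two assertions in turn, proving that $f_{\lambda,\mu}A$ is a Poisson ideal first and then deducing the statement about $uA$ from it. For the first assertion I would show directly that $\{b,f_{\lambda,\mu}\}\in f_{\lambda,\mu}A$ for every $b\in A$; since each $\ham(b)$ is a derivation, this is enough for $f_{\lambda,\mu}A$ to be a Poisson ideal. For the second I would invoke the fact, noted earlier in the paper, that in a Noetherian Poisson algebra the minimal primes over a Poisson ideal are themselves Poisson, hence Poisson prime.

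The heart of the matter is the computation of $\{b,f_{\lambda,\mu}\}$. Writing the qm-exact triple of \eqref{qmb} as $F=t\grad s-s\grad t$ and using $\{b,c\}^F=F\cdot(\grad b\times\grad c)$, I would compute
\[\{b,s\}=(t\grad s-s\grad t)\cdot(\grad b\times\grad s)=-s\,\grad t\cdot(\grad b\times\grad s),\]
the $\grad s$ term vanishing because a scalar triple product with a repeated vector is zero, and likewise
\[\{b,t\}=t\,\grad s\cdot(\grad b\times\grad t).\]
Both triple products are equal to $\pm|\Jac(s,b,t)|$, and, setting $D:=|\Jac(s,b,t)|\in A$, the signs combine to give $\{b,s\}=sD$ and $\{b,t\}=tD$. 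The crucial feature is that the \emph{same} factor $D$ occurs in both, so that
\[\{b,f_{\lambda,\mu}\}=\lambda\{b,s\}-\mu\{b,t\}=(\lambda s-\mu t)D=f_{\lambda,\mu}\,D\in f_{\lambda,\mu}A.\]
Conceptually, the identity $\{b,s\}/s=\{b,t\}/t$ is precisely the statement that $a=st^{-1}$ is Poisson central in $Q(A)$, which holds because $a$ is the potential of the underlying exact bracket; this step is thus a concrete manifestation of that centrality.

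With $f_{\lambda,\mu}A$ known to be Poisson, the second assertion is quick. Since $A=\C[x,y,z]$ is a unique factorization domain and $u$ is irreducible, $uA$ is a height one prime ideal, and as $u$ divides $f_{\lambda,\mu}$ it is one of the minimal primes over the principal ideal $f_{\lambda,\mu}A$. Because minimal primes over a Poisson ideal are again Poisson, $uA$ is a Poisson ideal, and being prime it is Poisson prime.

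I do not anticipate a serious obstacle: the lemma is essentially a short computation together with the standard fact about minimal primes. The one point demanding care is the triple-product bookkeeping that produces the common factor $D$ in $\{b,s\}$ and $\{b,t\}$, since if this failed the combination $\lambda s-\mu t$ would not divide $\{b,f_{\lambda,\mu}\}$; it is worth rechecking the sign convention of \eqref{tripledot} when identifying $D$ with $|\Jac(s,b,t)|$. As an alternative to the minimal-primes argument, one can treat $uA$ directly: writing $f_{\lambda,\mu}=u^{k}m$ with $u\nmid m$, expanding $\{b,f_{\lambda,\mu}\}=f_{\lambda,\mu}D$ by the Leibniz rule and cancelling $u^{k-1}$ yields $km\{b,u\}\in uA$, so that $\{b,u\}\in uA$ since $\gcd(u,km)=1$.
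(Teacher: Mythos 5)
Your proof is correct and takes essentially the same route as the paper: both arguments show that $f_{\lambda,\mu}$ divides every bracket $\{b,f_{\lambda,\mu}\}$, so that $f_{\lambda,\mu}A$ is Poisson, and then obtain $uA$ as a minimal prime over this Poisson ideal, which is therefore Poisson prime. The only difference is one of presentation: the paper checks divisibility by explicit coordinate computations of $\{x,f_{\lambda,\mu}\}$, $\{y,f_{\lambda,\mu}\}$ and $\{z,f_{\lambda,\mu}\}$ (each equal to $f_{\lambda,\mu}$ times a $2\times 2$ minor of the Jacobian of $(s,t)$), whereas you derive the common factor $D=|\Jac(s,b,t)|$ for arbitrary $b$ by scalar-triple-product identities, which is the same divisibility obtained coordinate-freely.
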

\begin{proof} Routine calculations show that
\begin{eqnarray*}
\{x,\lambda s-\mu t\}&=&(\lambda s-\mu t)(s_zt_y-t_zs_y),\\
\{y,\lambda s-\mu t\}&=&(\lambda s-\mu t)(s_xt_z-t_xs_z) \mbox{ and }\\
\{z,\lambda s-\mu t\}&=&(\lambda s-\mu t)(s_yt_x-t_ys_z).
\end{eqnarray*}
Hence $f_{\lambda,\mu}A$ is Poisson and, $uA$, being a minimal prime over $f_{\lambda,\mu}A$,
is Poisson prime.
\end{proof}

The next lemma specifies the Poisson maximal ideals of $A$.

\begin{lemma}\label{max}
Let $p=(\alpha,\beta,\gamma)\in \C^3$ and let $M=\langle
x-\alpha,y-\beta,z-\gamma\rangle$ be the corresponding maximal ideal of $A$.
Then $M$ is a Poisson ideal of $A$ if and only if
\begin{enumerate}
\item  $p$ is a common zero of $s$ and $t$ or
\item  $p$ is a singular point of $f_{\lambda,\mu}$ for some $(\lambda,\mu)\in\Proj$.
\end{enumerate}
\end{lemma}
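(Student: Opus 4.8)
The plan is to reduce the statement that $M$ is a Poisson ideal to the single condition $F(p)=0$, where $F=(f,g,h)$ is the triple of the bracket and $p=(\alpha,\beta,\gamma)$, and then to recognise $F(p)=0$ as exactly the disjunction of (1) and (2). So the first goal is to show that $M$ is Poisson if and only if $f(p)=g(p)=h(p)=0$.

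For the forward direction of this reduction, if $M$ is Poisson then, since $x,y,z\in A$ and $x-\alpha,y-\beta,z-\gamma\in M$, the elements $\{x,y-\beta\}=h$, $\{y,z-\gamma\}=f$ and $\{z,x-\alpha\}=g$ all lie in $M$; hence $f,g,h$ vanish at $p$. For the converse I would use that $\{-,w\}$ is a derivation in its first slot (from antisymmetry together with the derivation property in the second slot), so that for any $b\in A$ and any generator $w$, the element $\{b,w\}$ is an $A$-linear combination of $f,g,h$. As $\ham(b)$ is a derivation annihilating the scalars $\alpha,\beta,\gamma$, it sends each generator of $M$ into the ideal generated by $f,g,h$, so $\ham(b)(M)\subseteq M$ once $f,g,h\in M$. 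Thus $M$ is Poisson precisely when $F(p)=0$.

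To match $F(p)=0$ with (1) and (2), I would first rewrite the triple using \eqref{qmb} as $F=(ts_x-st_x,\,ts_y-st_y,\,ts_z-st_z)=t\grad s-s\grad t$, so that $F(p)=0$ reads $t(p)\grad s(p)=s(p)\grad t(p)$. If $s(p)=t(p)=0$ this holds trivially, giving case (1). If instead $F(p)=0$ with $(s(p),t(p))\neq(0,0)$, I would set $(\lambda,\mu)=(t(p),s(p))\in\Proj$; then $f_{\lambda,\mu}(p)=t(p)s(p)-s(p)t(p)=0$ and $\grad f_{\lambda,\mu}(p)=t(p)\grad s(p)-s(p)\grad t(p)=F(p)=0$, so $p$ is a singular point of $f_{\lambda,\mu}$, which is case (2). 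For the reverse implications, case (1) gives $F(p)=0$ at once, while if $p$ is a singular point of $f_{\lambda,\mu}=\lambda s-\mu t$ then $\lambda s(p)=\mu t(p)$ and $\lambda\grad s(p)=\mu\grad t(p)$; substituting, say with $\lambda\neq 0$, $s(p)=(\mu/\lambda)t(p)$ and $\grad s(p)=(\mu/\lambda)\grad t(p)$ yields $F(p)=0$.

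I expect the reduction step to be the main point to pin down carefully, namely the claim that testing the Poisson condition on the three coordinate generators against the three generators of $M$ is enough. The only other delicate bookkeeping is the case where exactly one of $s(p),t(p)$ vanishes: then $p$ is not a common zero of $s$ and $t$, so (1) fails, but the choice $(\lambda,\mu)=(t(p),s(p))$ makes $f_{\lambda,\mu}$ a nonzero scalar multiple of $s$ or of $t$ (a genuine non-unit by the coprimality of $s$ and $t$, away from the trivial case of an identically zero bracket), of which $p$ is a true singular point, so (2) holds.
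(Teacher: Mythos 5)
Your proof is correct and takes essentially the same route as the paper's: both arguments come down to the observation that $M$ is Poisson exactly when the triple $(ts_x-st_x,\,ts_y-st_y,\,ts_z-st_z)$ vanishes at $p$, and both identify that vanishing with $p$ being a singular point of $f_{\lambda,\mu}$ for $(\lambda,\mu)$ proportional to $(t(p),s(p))$. The differences are only organizational: you isolate and justify the (standard, and in the paper implicit) reduction to checking the bracket on the generators of $M$, and your single choice $(\lambda,\mu)=(t(p),s(p))$ handles uniformly what the paper treats as three cases according to whether $t(p)$, $s(p)$, or neither vanishes.
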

\begin{proof}
Suppose that $M$ is a Poisson ideal of $A$ and that $p$ is not
a common zero of $s$ and $t$. Consider the case where $s(p)\neq0$ and $t(p)=0$.
Since
\[0=\{y,z-\gamma\}(p)=\{y,z\}(p)=(ts_x-st_x)(p)=-s(p)t_x(p),\]
we have $t_x(p)=0$. Similarly $t_y(p)=0=t_z(p)$,
so $p$ is a singular point of $f_{0,1}=-t$, and, if $s(p)=0$ and $t(p)\neq0$, then $p$ is a singular point of $f_{1,0}=s$.

Now suppose that $s(p)\neq0$ and $t(p)\neq0$, let $\lambda=t(p)/s(p)\neq0$ and let $f=f_{\lambda,1}=\lambda s-t$. Thus $f(p)=0$, $\lambda s(p)=t(p)$ and
$f_x=\lambda s_x-t_x$.
Since
\[0=\lambda\{y,z-\gamma\}(p)=\lambda\{y,z\}(p)=\lambda(ts_x-st_x)(p)
=t(p)f_x(p),\]
we have
$f_x(p)=0$. Similarly, $f_y(p)=0=f_z(p)=0$
so $p$ is a singular point of $f=f_{\lambda,1}$.

For the converse, it is clear that from \eqref{qmb} that $M$ is Poisson if  $s(p)=t(p)=0$. Suppose that $p$ is a singular point of $f_{\lambda,\mu}=\lambda s-\mu t$ for some $(\lambda,\mu)\in\Proj$. Then
\[\lambda s(p)-\mu t(p)=0=\lambda s_x(p)-\mu t_x(p)=\lambda s_y(p)-\mu t_y(p)=\lambda s_z(p)-\mu t_z(p).\]
If $\lambda\neq0$ then
\[\lambda(s_zt-st_z)(p)
=t(p)(\lambda s_z-\mu t_z)(p)=0,\]
and thus $\{x-\alpha, y\}=\{x,y\}=s_zt-st_z\in M$. Similarly
$\{x-\alpha, z\}$, $\{y-\beta, x\}$, $\{y-\beta, z\}$, $\{z-\gamma, x\}$ and $\{z-\gamma,y\}$ all
belong to  $M$, which is therefore a Poisson ideal.
The case when $\lambda=0$ and $\mu\neq0$  is similar.
\end{proof}

We next determine the Poisson primitive ideals of $A$.
For a Poisson prime ideal $P$ of an affine Poisson algebra $A$, there is, by \cite[1.7(i) and 1.10]{ohsymp}, a sequence of implications:
$$P\text{ is locally closed $\Rightarrow P$ is Poisson primitive $\Rightarrow P$ is rational}.$$
To establish the \emph{Poisson Dixmier-Moeglin equivalence}, it is enough to show that if $P$ is a rational Poisson prime ideal of $A$ then $P$ is locally closed. For further discussion of this, see \cite{goodPDM,goodsemiclass,goodlaun}. Some of the examples considered here are covered by those papers.

\begin{lemma} Let $s,t\in A\backslash\{0\}$ be coprime and let
$a=st^{-1}\in Q(A)$.
Let $(\lambda,\mu)\in\Proj$ and let $u$ be an irreducible factor in $A$ of $f_{\lambda,\mu}$.
The following are equivalent.
\begin{enumerate}
\item $uA$ is Poisson primitive;
\item $uA$ is not residually null;
\item $u$ has multiplicity one, as an irreducible factor of $f_{\lambda,\mu}$;
\item $uA$ is locally closed.
\end{enumerate}\label{primitivelemma}
\end{lemma}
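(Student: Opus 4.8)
The plan is to establish the cycle $(1)\Rightarrow(2)\Leftrightarrow(3)\Rightarrow(4)\Rightarrow(1)$, taking $(4)\Rightarrow(1)$ and the implication ``Poisson primitive $\Rightarrow$ rational'' for granted from the chain of implications recorded just before the lemma. First I would record, by substituting $f_w=\lambda s_w-\mu t_w$ into \eqref{qmb}, the identities
\[\lambda\{y,z\}=tf_x-ft_x,\quad\lambda\{z,x\}=tf_y-ft_y,\quad\lambda\{x,y\}=tf_z-ft_z\]
together with their $s$-analogues $\mu\{y,z\}=sf_x-fs_x$, $\mu\{z,x\}=sf_y-fs_y$, $\mu\{x,y\}=sf_z-fs_z$, where $f=f_{\lambda,\mu}$. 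Since $(\lambda,\mu)\neq(0,0)$, I may assume $\lambda\neq0$ (the case $\lambda=0$, $\mu\neq0$ being identical with $s$ in place of $t$); then $u\mid f$ forces $u\nmid t$, for otherwise $u\mid\lambda s$ and $u$ would divide both $s$ and $t$.

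For $(2)\Leftrightarrow(3)$, write $f=u^kh$ with $u\nmid h$, so that $k\geq1$ is the multiplicity of $u$. The first identity factors as
\[\lambda\{y,z\}=u^{k-1}\bigl(ktu_xh+u(th_x-ht_x)\bigr),\]
and similarly for the other two brackets, with $u_x$ replaced by $u_y$ and $u_z$. As $u\nmid t$, $u\nmid h$ and $k$ is a nonzero scalar, the $u$-adic valuation of $\{y,z\}$ equals $k-1$ unless $u\mid u_x$; hence $u\mid\{y,z\}$ exactly when $k\geq2$ or $u\mid u_x$, and likewise for $\{z,x\}$, $\{x,y\}$ with $u_y$, $u_z$. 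Thus $uA$ is residually null if and only if $k\geq2$ or $u$ divides each of $u_x,u_y,u_z$. The second alternative is impossible: $u$ is a non-constant irreducible over a field of characteristic $0$, so some $u_w\neq0$, and then $u\nmid u_w$ for degree reasons. Therefore $uA$ is residually null precisely when $k\geq2$, which is exactly $(2)\Leftrightarrow(3)$.

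Next, for $(1)\Rightarrow(2)$ I would argue contrapositively: if $uA$ is residually null then the induced bracket on $A/uA$, and hence on $Q(A/uA)$, is zero, so $\Pz(Q(A/uA))=Q(A/uA)\neq\C$ because $A/uA$ has dimension $2$; thus $uA$ is not rational and, by the cited chain, not Poisson primitive. For $(3)\Rightarrow(4)$, the decisive point is that any Poisson prime $Q$ with $Q\supsetneq uA$ satisfies $\hgt Q\geq2$, hence $\dim A/Q\leq1$, so Proposition~\ref{codimone} makes $Q$ residually null; equivalently $J\subseteq Q$. Combined with $(3)\Rightarrow(2)$, which gives $J\not\subseteq uA$, this identifies
\[\{Q\in\Pspec A:Q\supsetneq uA\}=V(J+uA)\cap\Pspec A\]
as a closed subset of $\Pspec A$ that omits $uA$. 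Hence $\{uA\}$ is open in its closure $V(uA)\cap\Pspec A$, so $uA$ is locally closed, and $(4)\Rightarrow(1)$ then closes the cycle.

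The computation in $(2)\Leftrightarrow(3)$ is routine once the factorisation is in hand; its only delicate points are keeping track of which of $s,t$ is coprime to $u$ and using characteristic $0$ to exclude $u\mid u_x,u_y,u_z$ simultaneously. I expect the genuine content to lie in $(3)\Rightarrow(4)$: the essential ingredient is Proposition~\ref{codimone}, which confines every Poisson prime properly above the height-one ideal $uA$ to the residually null locus $V(J)$, so that this locus carves out exactly the proper specialisations of $uA$ and exhibits it as a locally closed point of $\Pspec A$.
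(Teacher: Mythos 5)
Your proof is correct, and its skeleton coincides with the paper's: the same cycle of implications, the same identities extracted from \eqref{qmb}, the same appeal to Proposition~\ref{codimone}, the same characteristic-zero degree argument excluding $u\mid u_x,u_y,u_z$ simultaneously, and the same citation of \cite[1.7(i)]{ohsymp} for (4)$\Rightarrow$(1). Two steps are executed differently. For (1)$\Rightarrow$(2) the paper simply notes that a residually null Poisson primitive ideal must be maximal (once $J\subseteq P$, every ideal containing $P$ is Poisson, so the Poisson core of any maximal ideal above $P$ is that maximal ideal itself, forcing $P$ to be maximal), whereas you pass through rationality: $\Pz(Q(A/uA))=Q(A/uA)\neq\C$, so $uA$ is not rational, hence not primitive. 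Both are one-line appeals to facts already on the table, though the paper's is self-contained while yours uses the implication ``primitive $\Rightarrow$ rational''. More notably, in (3)$\Rightarrow$(4) the paper re-enters the coordinate computation: writing $f_{\lambda,\mu}=ug$, it shows each Poisson prime $Q\supsetneq uA$ contains $u_wgt$ and $u_wgs$ for $w=x,y,z$, then, assuming $uA$ is not locally closed, pushes these elements into $uA$ and reruns the primeness-plus-degree argument to reach a contradiction. Your version is cleaner and more modular: having proved (2)$\Leftrightarrow$(3) once, you know $J\not\subseteq uA$, and Proposition~\ref{codimone} alone confines every Poisson prime strictly containing $uA$ to $V(J+uA)\cap\Pspec A$, a closed set omitting $uA$; hence $uA$ is locally closed. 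This isolates the general statement that any height-one Poisson prime which is not residually null is locally closed, uses the multiplicity-one hypothesis only through (2), and avoids duplicating the partial-derivative element-chasing that the paper performs a second time inside (3)$\Rightarrow$(4).
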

\begin{proof}
First note that if $g\in A$ is such that $f_{\lambda,\mu}=ug$ then
\begin{equation*}
\lambda s=\mu t+ug\text{ and } \lambda s_x=\mu t_x+u_xg+ug_x,
\end{equation*}
from which it follows that
\begin{equation}
\lambda(st_x-ts_x)=ugt_x-u_xgt-ug_xt\text{ and }\mu(st_x-ts_x)=ugs_x-u_xsg-usg_x.
\label{either}\end{equation}

\noindent (1)$\Rightarrow$(2) Any residually null Poisson primitive ideal must be maximal.

\noindent (2)$\Rightarrow$(3) Suppose that the multiplicity of $u$ is greater than $1$. Then $u$ divides $g$ so, as $\lambda\neq 0$ or $\mu\neq 0$,
$\{y,z\}=st_x-ts_x\in uA$. Similarly $\{x,y\}, \{z,x\}\in uA$ so $uA$ is residually null.

\noindent (3)$\Rightarrow$(4) Let $Q$ be a Poisson prime ideal properly containing $uA$. By Proposition~\ref{codimone}, $Q$ is residually null so, by \eqref{qmb} and \eqref{either}, $u_xgt\in Q$, $u_xgs\in Q$ and, similarly, $u_ygt\in Q$, $u_ygs\in Q$, $u_zgt\in Q$ and
 $u_zgs\in Q$. If $uA$ were not locally closed then, as $g\notin uA$ and as $s$ and $t$, being coprime,  cannot both be in $uA$, we would have $u_y, u_x, u_z\in uA$, which is impossible, by degree, as at least one of $u_x, u_y$ and $u_z$ is non-zero.  Hence $uA$ is locally closed

\noindent (4)$\Rightarrow$(1) This holds by \cite[1.7(i)]{ohsymp}.
\end{proof}

\begin{theorem}\label{primitive} Let $s,t\in A\backslash\{0\}$ be coprime and let
$a=st^{-1}\in Q(A)$.
The Poisson primitive ideals of $A$ for the qm-exact bracket determined by $a$ are the Poisson maximal ideals, as specified in Lemma 3.5, and the ideals $uA$, where $u$ is an irreducible factor, with multiplicity one, of $f_{\lambda,\mu}$, for some $(\lambda, \mu)\in\Proj$ for which $f_{\lambda,\mu}$ is a non-zero non-unit.
Moreover $A$ satisfies the Poisson Dixmier-Moeglin equivalence.
\end{theorem}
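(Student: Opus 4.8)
The plan is to stratify the argument by the dimension $d=\dim(A/P)$ of a Poisson prime $P$, since the chain of implications recorded before the statement,
\[P\text{ locally closed}\Rightarrow P\text{ Poisson primitive}\Rightarrow P\text{ rational},\]
reduces the Poisson Dixmier--Moeglin equivalence to the single implication rational $\Rightarrow$ locally closed, while the Poisson primitive ideals are exactly the Poisson cores of maximal ideals. I would treat the four values $d\in\{0,1,2,3\}$ in turn, showing in each case that being locally closed, Poisson primitive and rational coincide, and reading off the list of primitive ideals as I go.

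The one computational input I would isolate first is that, because $a=st^{-1}$ is Poisson central in $Q(A)$, the quotient rule gives $t\{s,b\}=s\{t,b\}$ in $A$ for every $b\in A$. Consequently, for a height-one Poisson prime $P=uA$ with $u\nmid t$, reduction modulo $uA$ shows that $\bar a:=\bar s\,\bar t^{-1}$ lies in $\Pz(Q(A/uA))$. With this in hand the extreme strata are immediate. For $d=3$ we have $P=0$, and $a\in\Pz(Q(A))\setminus\C$ (the bracket being non-zero), so $0$ is not rational and hence, by the chain, neither primitive nor locally closed. For $d=1$, Proposition~\ref{codimone} shows $P$ is residually null, so the induced bracket on $Q(A/P)$ vanishes while $Q(A/P)\neq\C$; again $P$ is not rational, so not primitive or locally closed. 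For $d=0$, $P$ is a maximal ideal which, being Poisson prime, is a Poisson maximal ideal of the form described in Lemma~\ref{max}; it is a closed point of $\Pspec A$, hence locally closed, it equals its own Poisson core, hence is primitive, and it is plainly rational. These maximal ideals constitute the first family in the statement.

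The substantive stratum is $d=2$, where $P=uA$ with $u$ irreducible. Suppose first that $P$ is primitive, hence rational. If $u\mid t$ then $u\mid f_{0,1}$; otherwise $\bar a\in\Pz(Q(A/uA))=\C$ by rationality, so $\bar s=c\bar t$ for some $c\in\C$ and $u\mid f_{1,c}$. In either case $u$ is an irreducible factor of some $f_{\lambda,\mu}$, so Lemma~\ref{primitivelemma} applies, and since $P$ is primitive, the implication (1)$\Rightarrow$(3) there forces $u$ to have multiplicity one. Thus every primitive ideal with $d=2$ lies in the second family of the statement; conversely Lemma~\ref{primitivelemma}, via (3)$\Rightarrow$(1), shows that each such $uA$ is primitive. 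Assembling the four strata identifies the Poisson primitive ideals exactly as claimed.

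Finally, for the Poisson Dixmier--Moeglin equivalence I would verify rational $\Rightarrow$ locally closed stratum by stratum: it holds vacuously for $d=3$ and $d=1$ (there are no rational primes there), trivially for $d=0$ (maximal ideals are closed points), and for $d=2$ the same dichotomy yields $u\mid f_{\lambda,\mu}$, after which rationality rules out residual nullity, and Lemma~\ref{primitivelemma}, via (2)$\Rightarrow$(4), delivers local closedness. I expect the only genuine obstacle to be the $d=2$ analysis, and specifically the step that promotes a rational prime $uA$ to a factor of some $f_{\lambda,\mu}$; what makes this painless is the observation that $\bar a$ is \emph{automatically} Poisson central in $Q(A/uA)$, so rationality alone pins it to a scalar. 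Everything else is bookkeeping across the dimension stratification combined with the equivalences already recorded in Lemma~\ref{primitivelemma}.
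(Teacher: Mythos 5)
Your proposal is correct and takes essentially the same route as the paper's proof: both rest on the implications locally closed $\Rightarrow$ primitive $\Rightarrow$ rational, Proposition~\ref{codimone} to dispose of primes of codimension $\leq 1$, Lemma~\ref{primitivelemma} for the height-one stratum, and the descent of the Poisson centrality of $a=st^{-1}$ to $Q(A/P)$ so that rationality forces $f_{\lambda,\mu}\in P$. Your dimension stratification is just a reorganization of the paper's case analysis (the paper instead first deduces $f_{\lambda,\mu}\in P$ and then splits on whether $P=uA$ or $P\supsetneq uA$), and your explicit identity $t\{s,b\}=s\{t,b\}$ is a spelled-out version of the paper's terse appeal to $s/t\in\Pz(Q(A))$.
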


\begin{proof} As Poisson maximal ideals are always Poisson primitive, it is immediate from Lemma~\ref{primitivelemma} that the listed ideals are all Poisson primitive. Now let $P$ be any Poisson  primitive ideal.
Since $s/t\in \Pz (Q(A))$, it follows from \cite[1.10]{ohsymp}, where Poisson primitive ideals are called \emph{symplectic}, that $P$ contains
$f_{\lambda,\mu}$ for some $(\lambda, \mu)\in\Proj$ and hence that $u\in P$
 for some irreducible factor
$u$ of $f_{\lambda,\mu}$. Either $P=uA$, in which case, by Lemma~\ref{primitivelemma}, $u$ has multiplicity one, or $P$ strictly contains $uA$, in which case,
by Proposition~\ref{codimone}, $P$ is residually null and hence maximal.

To establish the Poisson Dixmier-Moeglin equivalence, let $P^\prime$ be a rational Poisson prime ideal. If $P^\prime$ is residually null then $\C=\Pz(Q(A/P^\prime))=Q(A/P^\prime)$ and $P^\prime$ must be a Poisson maximal ideal and hence locally closed. Thus we may assume that $P^\prime$ is not residually null. By Proposition~\ref{codimone},
$P^\prime$ has height one. As $s/t\in \Pz(Q(A))$, $P^\prime$ contains an irreducible factor $u$ of $f_{\lambda,\mu}$
for some $(\lambda,\mu)\in\Proj$. As $uA$ is prime Poisson and $P^\prime$ has height one,
$P^\prime=uA$. By \ref{primitivelemma}(2)$\Rightarrow$(4), $P^\prime$ is locally closed.
Thus  $A$ satisfies
the Poisson Dixmier-Moeglin equivalence.
\end{proof}

\begin{theorem}
\label{Pspecexact} Let $s,t\in A\backslash\{0\}$ be coprime and let
$a=st^{-1}\in Q(A)$. The Poisson prime ideals for $A$
under the qm-exact bracket determined by $a$ are $0$, the residually null Poisson prime ideals
and the height one prime ideals $uA$, where $u$ is an irreducible factor of $f_{\lambda,\mu}$ for
some $(\lambda,\mu)\in \Proj$ such that $f_{\lambda,\mu}$ is a non-zero non-unit.
\end{theorem}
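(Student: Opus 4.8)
The plan is to prove the two inclusions separately, the forward one being routine and the reverse one carrying all the content. First I would record that each listed ideal really is Poisson prime: the zero ideal is prime (as $A$ is a domain) and is visibly a Poisson ideal; a residually null Poisson prime is Poisson prime by definition; and for a height one ideal $uA$ with $u$ an irreducible factor of a non-zero non-unit $f_{\lambda,\mu}$, this is precisely Lemma~\ref{htoneqmexact}. So the substance is to show that there are no others.

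For the converse, let $P$ be a Poisson prime ideal with $P\neq 0$ and $P$ not residually null. Since $P$ is not residually null, Proposition~\ref{codimone} shows that $P$ cannot be of codimension $\leq 1$, so it has height at most one; as $P\neq 0$, it has height exactly one, and since $A=\C[x,y,z]$ is a unique factorization domain this forces $P=uA$ for some irreducible $u\in A$. It then remains only to show that $u$ divides $f_{\lambda,\mu}$ for some $(\lambda,\mu)\in\Proj$ with $f_{\lambda,\mu}$ a non-zero non-unit. Since $s$ and $t$ are coprime, $u$ divides at most one of them; if $u\mid s$ then $u\mid f_{1,0}=s$ and if $u\mid t$ then $u\mid f_{0,1}=-t$, so I may assume $u\nmid s$ and $u\nmid t$. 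Writing $B=A/P$ and letting $\bar s,\bar t$ denote the (nonzero) images of $s,t$, I would set $\bar a=\bar s\,\bar t^{-1}\in Q(B)$ and show that $\bar a\in\Pz(Q(B))$. This descends from the Poisson centrality of $a=st^{-1}$ in $Q(A)$ noted before Lemma~\ref{htoneqmexact} (which itself holds because $a$ is central for the exact bracket $\grad a$ and multiplication by $t^2$ preserves centrality): for every $b\in A$ the identity $t\{s,b\}=s\{t,b\}$ holds in $A$, and reducing modulo the Poisson ideal $P$ gives $\bar t\{\bar s,\bar b\}=\bar s\{\bar t,\bar b\}$, whence $\{\bar a,\bar b\}=0$ in $Q(B)$; the derivation property then propagates this to all of $Q(B)$.

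The crux, and the step I expect to be the main obstacle, is to deduce that $\bar a\in\C$. Here I would use that $B$ has a \emph{non-zero} induced bracket (because $P$ is not residually null) and that $Q(B)$ has transcendence degree $2$ over $\C$ (because $P$ has height one). On such a field $L$ the module $\Der_\C(L)$ is $2$-dimensional over $L$, so a Poisson bracket corresponds to a nonzero element of $\wedge^2_L\Der_\C(L)\cong L$; writing the bracket as $\phi\,\partial_1\wedge\partial_2$ with $\phi\neq 0$ for an $L$-basis $\partial_1,\partial_2$, a short computation shows that its Poisson centre equals $\bigcap_{\delta\in\Der_\C(L)}\ker\delta$, which is $\C$ since $\C$ is algebraically closed of characteristic $0$. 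Making this degeneration-type argument fully rigorous, rather than appealing to generic symplectic leaves, is the delicate point. Granting it, $\bar a=c$ for some $c\in\C$; since $u\nmid s$ forces $\bar a\neq 0$ we have $c\neq 0$ and $s-ct\in uA$, so $u\mid f_{1,c}$. As $u$ is a non-unit dividing $f_{1,c}$, the polynomial $f_{1,c}$ is a non-zero non-unit, which completes the classification.

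It is worth noting, as a sanity check on the statement, that this argument in fact produces a $u$ of multiplicity one in $f_{1,c}$: a factor of higher multiplicity would make $uA$ residually null by Lemma~\ref{primitivelemma}, contradicting our standing assumption, so the two height one cases in the theorem (residually null versus $u\mid f_{\lambda,\mu}$) are correctly kept disjoint.
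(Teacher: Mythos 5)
Your proposal is correct, and its hard direction takes a genuinely different route from the paper's. The paper argues as follows: by \cite[Theorem 5.5]{mat} a prime of the affine algebra $A$ is the intersection of the maximal ideals containing it; since intersections of Poisson maximal ideals are residually null, a non-residually-null Poisson prime $Q$ must lie in $\mathcal{P}(M)$ for some non-Poisson maximal ideal $M$, and Theorem~\ref{primitive} --- whose key input is \cite[1.10]{ohsymp}, i.e.\ that Poisson primitive (symplectic) ideals are rational, so that the Poisson central element $s/t$ must degenerate to a scalar modulo $\mathcal{P}(M)$ --- identifies $\mathcal{P}(M)$ as $uA$ with $u$ an irreducible factor of some $f_{\lambda,\mu}$; height considerations then give $Q=0$ or $Q=uA$. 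You instead get $P=uA$ at once from Proposition~\ref{codimone} together with the UFD property of $A$, and then prove the needed rationality statement from scratch: on the transcendence-degree-two field $L=Q(A/P)$ the space of antisymmetric $\C$-biderivations is one-dimensional over $L$, so a non-zero Poisson bracket on $L$ has Poisson centre $\bigcap_{\delta\in\Der_\C(L)}\ker\delta=\C$, forcing $\bar a=\bar s\,\bar t^{-1}=c\in\C$ and hence $u\mid f_{1,c}$. What the paper's route buys is brevity and the extra multiplicity-one information, at the price of routing through Poisson cores, the Jacobson property, and the cited result of \cite{ohsymp}; what your route buys is self-containedness --- it bypasses Theorem~\ref{primitive} entirely and in effect reproves the relevant case of \cite[1.10]{ohsymp} --- at the price of the biderivation computation, which you rightly flag as the delicate point but whose sketch is sound (take a separating transcendence basis $u_1,u_2$ of $L$, write the bracket as $\{u_1,u_2\}\,(\partial_{u_1}\wedge\partial_{u_2})$, and note that Poisson central elements are killed by all derivations, hence algebraic over $\C$, hence in $\C$). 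One small patch is needed at the end: divisibility of $f_{1,c}$ by the non-unit $u$ does not by itself rule out $f_{1,c}=0$; but if $s=ct$ then coprimality forces $t\in\C^\times$, so $a\in\C$ and the bracket vanishes identically, contradicting the existence of a non-residually-null Poisson prime. Also note that the theorem's three classes are not claimed to be disjoint (an irreducible factor of multiplicity $\geq 2$ gives a residually null $uA$ that is also of the listed principal form), so your closing remark about disjointness is a property of your construction under the standing assumption, not something the statement requires.
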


\begin{proof} If $a\in \C$ the Poisson bracket is $0$ and the result is trivial. So we can assume that $a\notin \C$.
It follows from Lemma~\ref{htoneqmexact} that the listed prime
ideals are indeed Poisson prime.

Now let $Q$ be a Poisson prime ideal of $A$ that is not residually
null. By Proposition~\ref{codimone} either $Q=0$ or $Q$ has height one.
 By \cite[Theorem 5.5]{mat}, $Q$ is
the intersection of the maximal ideals that contain it. As intersections of Poisson
maximal ideals are residually null, it follows that there is a
non-Poisson maximal ideal $M=\langle x-\alpha, y-\beta, z-\gamma\rangle$ such
that
 $Q\subseteq \mathcal{P}(M)\subset M$.
By Theorem~\ref{primitive}, $\mathcal{P}(M)=uA$ where $u$ is an irreducible factor, of multiplicity one, of $f_{\lambda,\mu}$ for
some $(\lambda,\mu)\in \Proj$. It follows that either $Q=0$ or $Q=uA$.
\end{proof}

In the case $t=1$ we get the following result for the exact case,
where $a$ is Poisson central.
\begin{cor}\label{Pspecexactcor}
Let $a\in A\backslash\C$. The Poisson prime ideals for $A$ under
$\{-,-\}$ are $0$, the residually null Poisson prime ideals and
the height one prime ideals $pA$, where $p$ is an irreducible factor
in $A$ of $a-\mu$ for some $\mu \in \C$.
\end{cor}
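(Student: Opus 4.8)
The plan is to deduce this directly from Theorem~\ref{Pspecexact} by specialising to $t=1$, the only real work being to rewrite the $\Proj$-parametrisation of the relevant prime ideals in terms of the scalar parameter $\mu\in\C$. First I would observe that the hypotheses of Theorem~\ref{Pspecexact} are met: taking $s=a\in A\backslash\C$ and $t=1$, the elements $s$ and $t$ are trivially coprime, and since $a=st^{-1}$ the qm-exact bracket determined by $a$ is $t^2\{-,-\}_a=\{-,-\}_a$, i.e.\ the exact bracket $\grad(a)$. One also recovers, via \eqref{vectororth}, that $\{a,b\}^{\grad a}=\grad a.(\grad a\times\grad b)=0$ for all $b\in A$, so $a$ is Poisson central, as asserted in the sentence preceding the statement.

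Next I would analyse $f_{\lambda,\mu}=\lambda s-\mu t=\lambda a-\mu$ as $(\lambda,\mu)$ ranges over $\Proj$. When $\lambda=0$ we have $f_{0,\mu}=-\mu$, which is a unit at the point $(\lambda,\mu)=(0,1)\in\Proj$, so these points contribute no non-zero non-unit $f_{\lambda,\mu}$ and may be discarded. When $\lambda\neq0$, projective scaling lets me normalise $\lambda=1$, giving $f_{1,\mu}=a-\mu$ with $\mu\in\C$; as $(\lambda,\mu)$ ranges over the points of $\Proj$ with $\lambda\neq0$, the value $\mu$ ranges over all of $\C$. Since $a\notin\C$, each $a-\mu$ is automatically a non-zero non-unit: it is non-zero because $a=\mu$ would force $a\in\C$, and it is a non-unit because the units of $A$ are the non-zero scalars. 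Hence the non-zero non-unit polynomials $f_{\lambda,\mu}$ are, up to a non-zero scalar, exactly the polynomials $a-\mu$ for $\mu\in\C$, and these two families have the same irreducible factors.

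Finally I would conclude by quoting Theorem~\ref{Pspecexact}: its list of Poisson prime ideals---$0$, the residually null Poisson prime ideals, and the height one primes $uA$ with $u$ an irreducible factor of some non-zero non-unit $f_{\lambda,\mu}$---becomes, under the translation of the previous paragraph, precisely $0$, the residually null Poisson prime ideals, and the height one primes $pA$ with $p$ an irreducible factor in $A$ of $a-\mu$ for some $\mu\in\C$. There is no genuine obstacle here; the statement is a direct corollary, and the only point needing care is the bookkeeping that eliminates the $\lambda=0$ point of $\Proj$ and confirms that every $a-\mu$ is a non-zero non-unit, so that no potential height one prime is spuriously included or omitted.
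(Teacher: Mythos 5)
Your proposal is correct and follows exactly the route the paper intends: the paper states this corollary as an immediate specialisation of Theorem~\ref{Pspecexact} to the case $t=1$ (where the bracket is exact and $a$ is Poisson central), offering no further argument. Your careful bookkeeping---discarding the $\lambda=0$ point of $\Proj$, normalising $\lambda=1$, and checking that each $a-\mu$ is a non-zero non-unit since $a\notin\C$---simply makes explicit the translation the paper leaves to the reader.
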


\begin{cor}\label{Pspecexactcor2}
Suppose that there are only finitely many Poisson maximal ideals of
$A$ for $\{-,-\}$.
\begin{enumerate}
\item Every residually null
Poisson prime ideal of $A$ is a Poisson maximal ideal.
\item Suppose further that $f_{\lambda,\mu}$ is irreducible for all $(\lambda,\mu)\in \Proj$.
 Then the non-zero Poisson prime
ideals of $A$ are the finitely many Poisson maximal ideals and the
ideals $f_{\lambda,\mu}A$, $(\lambda,\mu)\in \Proj$. Moreover, for $(\lambda,\mu)\in \Proj$
  and $f=f_{\lambda,\mu}$, the Poisson algebra $A/fA$ is simple  if and
only if the surface $S_{f}:=\{(x,y,z)\in \C^3:f(x,y,z)=0\}$ is
smooth.
\end{enumerate}
\end{cor}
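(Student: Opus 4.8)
The plan is to deduce everything from Theorem~\ref{Pspecexact}, Lemma~\ref{max}, and the finiteness hypothesis, handling (1) first since it feeds into (2). For (1) I would argue by contradiction: if a residually null Poisson prime $P$ is not maximal then $A/P$ is a domain of Krull dimension at least one, so the variety $V(P)$ has infinitely many closed points. Each corresponding maximal ideal $M$ satisfies $M\supseteq P\supseteq J$, where $J$ is the ideal generated by the brackets $\{x,y\},\{y,z\},\{z,x\}$; a maximal ideal containing $J$ is residually null and, being maximal, is a Poisson maximal ideal (equivalently, the simultaneous vanishing of the three brackets at the corresponding point is exactly the Poisson condition recorded in Lemma~\ref{max}). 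This produces infinitely many Poisson maximal ideals, contradicting the hypothesis, so $P$ must be maximal.

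The crucial geometric input for (2) is that, under the irreducibility hypothesis together with finiteness, $s$ and $t$ have \emph{no common zero}. To see this I would note that $V(s,t)$ is cut out by two equations, so every minimal prime over $(s,t)$ has height at most two and hence positive-dimensional quotient; thus $V(s,t)$ is either empty or infinite. If it were nonempty, and so infinite, then by Lemma~\ref{max}(1) every one of its points would yield a distinct Poisson maximal ideal, again contradicting finiteness. Hence $V(s,t)=\varnothing$, condition (1) of Lemma~\ref{max} is vacuous, and each point $p\in\C^3$ lies on exactly one surface $S_{f_{\lambda,\mu}}$, namely the one with $(\lambda:\mu)=(t(p):s(p))$; the surfaces partition $\C^3$. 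I expect this emptiness step, and its careful use below, to be the main obstacle.

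With this in hand the first assertion of (2) is bookkeeping against Theorem~\ref{Pspecexact}: its residually null Poisson primes are, by part (1), precisely the finitely many Poisson maximal ideals, while its height-one members $uA$ reduce, by irreducibility of each $f_{\lambda,\mu}$, to the ideals $f_{\lambda,\mu}A$ themselves. For the simplicity criterion, fix $f=f_{\lambda,\mu}$ and recall, from the remark that minimal primes over a Poisson ideal in a Noetherian Poisson algebra are Poisson, that $A/fA$ is Poisson simple precisely when no Poisson prime ideal lies strictly between $fA$ and $A$. Any such intermediate prime has height at least two, so by Theorem~\ref{Pspecexact} it is residually null, hence by (1) a Poisson maximal ideal $M_p$ with $p\in S_f$. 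Because $V(s,t)=\varnothing$, the point $p$ lies on no surface other than $S_f$, so the only way Lemma~\ref{max} can certify $M_p$ as Poisson is condition (2) applied to $f$ itself, that is, $p$ is a singular point of $S_f$; conversely every singular point of $S_f$ produces such an $M_p$ by Lemma~\ref{max}(2). Thus the Poisson primes strictly containing $fA$ correspond exactly to the singular points of $S_f$, and $A/fA$ is Poisson simple if and only if $S_f$ is smooth.
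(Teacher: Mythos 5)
Your proof is correct, and its skeleton matches the paper's: your part (1) is the paper's argument (a non-maximal residually null Poisson prime lies under infinitely many maximal ideals, each of which contains the ideal $J$ of brackets and is therefore Poisson), and your crucial step for part (2) --- that the finiteness hypothesis forces $sA+tA=A$, via Krull's height theorem, the infinitude of closed points on a positive-dimensional variety, and Lemma~\ref{max}(1) --- is exactly the paper's first move. The one genuine divergence is how the smoothness criterion is finished. The paper proves directly, for a maximal ideal $M$ containing $f$, that $M$ is Poisson if and only if the corresponding point is singular on $S_f$, using the identities $tf_w=ft_w+\lambda(s_wt-st_w)$ and $sf_w=fs_w+\mu(ts_w-st_w)$ together with $sA+tA=A$; it never needs to know which surface the point lies on. You instead quote Lemma~\ref{max} wholesale and eliminate the ambiguity in its condition (2) (``singular point of $f_{\lambda',\mu'}$ for \emph{some} $(\lambda',\mu')$'') by observing that once $V(s,t)=\varnothing$ the surfaces $S_{f_{\lambda,\mu}}$ partition $\C^3$, so the only candidate is $f$ itself. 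Both routes are sound: the paper's is a short self-contained computation, while yours reuses the classification already proved and makes the geometry (Poisson maximal ideals $=$ singular points of the unique surface through the point) more transparent. You are also more explicit than the paper about two steps it leaves implicit --- why Poisson simplicity of $A/fA$ is equivalent to the absence of Poisson primes strictly between $fA$ and $A$ (minimal primes over Poisson ideals are Poisson), and why any such intermediate prime must be a Poisson maximal ideal (Theorem~\ref{Pspecexact} combined with part (1)).
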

\begin{proof}
\noindent (1) If $P$ is a residually null Poisson prime ideal that
is not maximal then, by \cite[Theorem 5.5]{mat}, $P$  is the
intersection of infinitely many maximal ideals all of which must be
Poisson.

\noindent (2) First note that $sA+tA=A$ in this case. For otherwise
$sA+tA$ is contained in a prime ideal of height at most two, by
\cite[Theorem 13.5]{mat}, and hence in infinitely maximal ideals,
all of which must be Poisson as they contain $st_w-ts_w$ for $w=x,
y$ and $z$.

Let $w=x, y$ or $z$ and let $M$ be a maximal ideal containing $f$.
Then
\[tf_w=ft_w+\lambda(s_wt-st_w)\text{ and }sf_w=fs_w+\mu(ts_w-st_w).\]
As $f\in M$, $sA+tA=A$ and $\lambda$ and $\mu$ are not both $0$, it
follows that $f_w\in M$ for $w=x,y$ and $z$ if and only if
$ts_w-st_w\in M$ for $w=x,y$ and $z$. Hence the point corresponding
to $M$ is singular if and only if $M$ is Poisson. It follows that
$S_{f}$ is smooth if and only if $fA$ is a maximal Poisson ideal.
\end{proof}

\section{Examples}

In this section we apply Theorems~\ref{primitive} and~\ref{Pspecexact} and their
corollaries to a variety of examples. For some of these the
Poisson prime and primitive ideals are already well-known but it may be instructive to view them in this more general context. In some cases we comment on
the prime spectrum of a noncommutative deformation of $A$. In the first examples, the bracket is exact, that
is, in the notation of Section~3, $t=1$ and $s=a$.

\begin{eg}
\label{firstK} Let $a=\frac{1}{2}z^2-2xy$ so that the Poisson
bracket is exact and $\{x,y\}=z$, $\{y,z\}=-2y$ and $\{z,x\}=-2x$.
As a Lie algebra under $\{-,-\}$, $\C x+\C y+\C z\simeq sl_2$. Thus
$A$ can be viewed as the symmetric algebra of $sl_2$ and $\{-,-\}$
as the Kirillov-Kostant-Souriau bracket. There is a
unique Poisson maximal ideal $M=\langle x,y,z\rangle$, $a\in M$ and, for all
$\mu\in\C$, $a-\mu$ is irreducible. By
Corollary~\ref{Pspecexactcor2}, the Poisson spectrum consists of $0$,
$M$ and the ideals $(a-\mu)A$, $\mu\in\C$, and, by
Theorem~\ref{primitive}, the Poisson primitive ideals are
all of these except $0$. In particular, for $\alpha, \beta,
\gamma\in \C$, not all zero,
$\mathcal{P}(\langle x-\alpha,y-\beta,z-\gamma\rangle)=(a-\mu)A$ where
$2\mu=\gamma^2-4\alpha\beta$. It is well-known, see
\cite[4.9.22]{dix}, that the completely prime ideals of $U=U(sl_2)$,
which is a deformation of $A$, are $0$, $(\Omega-\rho)U$, $\rho\in
\C$ and the maximal ideal $xU+yU+zU$, where $\Omega$ is the Casimir
element $4xy+z^2-2z$, and that, with the exception of $0$, they are
primitive.
\end{eg}

\begin{eg}\label{uqeg}
Let $a=2(x+y+z-xyz)$, for which $\{x,y\}=2(1-xy)$, $\{y,z\}=2(1-yz)$
and $\{z,x\}=2(1-xz)$. In the equitable presentation of
$U_q=U_q(sl_2)$, due to Ito, Terwilliger and Weng \cite{teretal},
$U_q$ is generated by $x, y$ and $z^{\pm 1}$ subject to the
relations
 \[ {q^2xy-yx}={q^2-1},\, {q^2yz-zy}={q^2-1}\mbox{
and }{q^2zx-xz}={q^2-1}.
\] Let $A^\prime=\C[x,y,z^{\pm 1}]$, the
localization of $A$ at the powers of $z$, extend $\{-,-\}$ to
$A^\prime$ and let $V_q$ be the subalgebra of $U_q$ generated by $x,
y$ and $z$. Then $U_q$ is a deformation of $A^\prime$ and $V_q$ is a
deformation of $A$. The only Poisson maximal ideals of $A$ are
$M_1:=\langle x-1,y-1,z-1\rangle$ and $M_2:=\langle x+1,y+1,z+1\rangle$, both of
which extend to $A^\prime$. By Corollary~\ref{Pspecexactcor2},
$\Pspec A=\{0,M_1,M_2\}\cup \{a-\mu:\mu\in \C\}$ and, as none of
these prime ideals contain $z$, their extensions to $A^\prime$ give
the full Poisson spectrum of $A^\prime$. For both $A$ and $A^\prime$, the Poisson primitive ideals are the non-zero Poisson prime ideals.

Using the usual presentation, as in
\cite[I.3.1]{BGl}, of $U_q(sl_2)$, it can be deduced from \cite[2.17]{htone} and
\cite[Theorem 6.1 and its proof]{primzeit} that, provided $q$ is not a root of unity, the completely prime ideals of $U_q$ are $0$, the principal ideals $(\Omega_q-\mu)U_q$, where $\Omega_q$ is the Casimir element and $\mu\in\C$, and two maximal ideals of codimension $1$, containing $\Omega_q-\mu$ for different values of $\mu$.
\end{eg}

\begin{eg}\label{locsolvinv} With $a$ as in Example~\ref{firstK}, $A/aA$ is
well-known to be the algebra of invariants for the Poisson
automorphism $\theta$ of the simple Poisson algebra $\C[x_1,y_1]$,
with $\{x_1,y_1\}=1$, such that $\theta(x_1)=-x_1$ and $\theta(y_1)=
-y_1$. In this example, and the one that follows,
 $A/aA$ again has an interpretation as the algebra of invariants for
a Poisson automorphism, of period two, of a simple
Poisson algebra.

Let $a=x(4-z^2)+y^2$ so that $\{x,y\}=-2xz$, $\{y,z\}=4-z^2$ and
$\{z,x\}=2y.$ Let $D=\C[x_1,x_2]$ with the Poisson bracket such that
$\{x_1,x_2\}=x_1$, that is, the Kirillov-Kostant-Souriau bracket for the
$2$-dimensional non-abelian solvable Lie algebra over $\C$. This
Poisson bracket extends to $B:=\C[x_1^{\pm 1},x_2]$ which is a
simple Poisson algebra and has a Poisson automorphism $\pi$ such
that $\pi(x_1)=x_1^{-1}$ and $\pi(x_2)=-x_2$. The Poisson algebra of
invariants $B^\pi$ is isomorphic to $A/aA$, see \cite{dajns}. There
are two Poisson maximal ideals in $A$, $M_1:=xA+yA+(z-2)A$ and
$M_2:=xA+yA+(z+2)A$ and $a\in M_1\cap M_2$. By
Corollary~\ref{Pspecexactcor2}, $\Pspec A=\{0,M_1,M_2\}\cup
\{(a-\mu)A:\mu\in \C\}$ and, by Theorem~\ref{primitive}, the
Poisson primitive ideals are the non-zero Poisson prime ideals.

For the Poisson bracket $\{-,-\}$, $A$ has a deformation $T$
generated by $x, y$ and $z$ subject to  the relations
\begin{eqnarray*}
xy&=&yx-2zx+3y+2z,\\
yz&=&zy-z^2+4,\\
xz&=&zx-z-2y,
\end{eqnarray*}
and with a central element $p=(4-z^2)x+y^2+3zy+z^2+4$, see
\cite{dajns}. This algebra $T$ is an iterated skew polynomial ring
$\C[z][y;\delta][x;\sigma,\delta_1]$, and standard
methods, involving \cite[Proposition
2.1.16]{McCR} and localization at the  Ore set $\{(z^2-4)^n:n\geq 0\}$, show that the completely prime ideals
of $T$ are $0$, the principal ideals $(p-\mu)T$ and two maximal ideals of codimension $1$ both containing $p-3$.

Although $aA$ corresponds topologically to $(p-3)T$,  it is $T/pT$, rather than $T/(p-3)T$, that is
the ring of invariants of the
deformation $\C[x_1^{\pm 1},x_2:x_1x_2-x_2x_1=x_1]$ of $B$, for the automorphism analogous to $\pi$. Indeed $T/pT$
is simple, in accordance with \cite[Theorem 28.3(ii)]{passman},
whereas $A/aA$, although the ring of invariants for an automorphism
of finite order of a simple Poisson algebra, is not simple.
Similar behaviour, in the context of Example~\ref{firstK} has been
observed in \cite{AF}.
\end{eg}

\begin{eg}\label{qtorusinv}
Let $a=xyz-x^2-y^2-z^2+4$ so that $\{x,y\}=xy-2z$,
$\{y,z\}=yz-2x$ and $\{z,x\}=zx-2y.$ Then $A/aA$ is isomorphic, as
a Poisson algebra, to the algebra of invariants for the Poisson
automorphism of $B:=\C[x_1^{\pm 1},x_2^{\pm 1}]$, with
$\{x_1,x_2\}=x_1x_2$, such that $\pi(x_1)=x_1^{-1}$ and
$\pi(x_2)=x_2^{-1}$, \cite{dajns}. There are five Poisson maximal
ideals $M_i$ in $A$, corresponding to the points $P_1=(2,2,2)$,
$P_2=(2,-2,-2)$, $P_3=(-2,2,-2)$, $P_4=(-2,-2,2)$, which are
singularities of $a$ and $P_5=(0,0,0)$, which is a singularity of
$a-4$.

 By
Corollary~\ref{Pspecexactcor2}, $\Pspec
A=\{0,M_1,M_2,M_3,M_4,M_5\}\cup \{(a-\mu)A:\mu\in \C\}$ and, by
Theorem~\ref{primitive}, the Poisson primitive ideals are the
non-zero Poisson prime ideals.

In \cite{dajns}, for $q\in \C$, the $\C$-algebra $T_q$ generated by
$x, y$ and $z$ subject to the relations
\begin{align*}
xy-qyx&=(1-q^2)z, \\
yz-qzy&=(q^{-1}-q)x, \\
zx-qxz&=(q^{-1}-q)y.
\end{align*} is shown to be a
deformation of $A$ and to have a central element $p$ such that
$T_q/pT_q$ is the algebra of invariants for the $\C$-automorphism  of the
quantum torus $\C[x_1^{\pm 1},x_2^{\pm 1}:x_1x_2=qx_2x_1]$ with $x_i\mapsto x_i^{-1}$ for $i=1,2$. Unlike
the previous example, the algebra $T_q$, which, if $q^2\neq 1$, is isomorphic to the
 cyclically $q$-deformed algebra $so_q(3)$ \cite{mp1}, is not directly susceptible
to skew polynomial methods and, in the case where $q$ is not a root
of unity, its prime spectrum is determined in \cite{dajtbw}. There is a bijection $\Gamma$ between
$\Pspec A$ and the set of completely prime ideals of $T_q$ and both $\Gamma$ and $\Gamma^{-1}$ preserve inclusions.
\end{eg}

\begin{eg}\label{whitney}
 Let $a=z^2-xy^2$, which determines the exact
Poisson bracket with $\{x,y\}=2z,\{y,z\}=-y^2$ and $\{z,x\}=-2xy$.
Thus $A/aA$ is the coordinate ring of the Whitney umbrella. This has a line of
singularities corresponding to the Poisson maximal ideals
$\langle x-\alpha,y,z\rangle$ for all $\alpha\in \C$. The residually null
Poisson prime ideals are those prime ideals of $A$ that contain
$yA+zA$. By Theorem~\ref{Pspecexactcor}, the Poisson spectrum
consists of $0$, the height one primes $(a-\mu)A$, $\mu\in \C$, the
height two prime ideal $yA+zA$ and the maximal ideals $\langle x-\alpha,y,z\rangle$,
$\alpha\in \C$. Of these, all except $0$ and $yA+zA$ are Poisson
primitive.
\end{eg}

\begin{eg}\label{heisenberg}
Let $a=x^2/2$ so that $\{y,z\}=x$, $\{x,y\}=\{z,x\}=0$
and $\{-,-\}$ is the Kirillov-Kostant-Souriau bracket for the
$3$-dimensional Heisenberg Lie algebra $\mathfrak{g}$. The residually null Poisson
prime ideals are the prime ideals that contain $x$. The prime ideals
generated by the irreducible factors of $x^2-\mu$, as $\mu$ varies,
are the ideals $(x-\sigma)A$, $\sigma\in \C$. These are all
Poisson and only $xA$ is residually null. By
Corollary~\ref{Pspecexactcor}, the Poisson spectrum consists of
$0$, the ideals $(x-\sigma)A$, $\sigma\in \C$, the ideals $xA+fA$,
where $f\in \C[y,z]$ is irreducible, and the Poisson maximal ideals
$\langle x,y-\beta,z-\gamma\rangle$, $\beta,\gamma\in \C$. The Poisson primitive
ideals are the ideals $(x-\sigma)A$, $\sigma\in \C\backslash\{0\}$,
and the Poisson maximal ideals. There is a homeomorphism $\Gamma:\Pspec A\rightarrow \spec U(\mathfrak{g})$,
with $\Gamma(P)$ primitive if and only if $P$ is Poisson primitive.

For a general study of Poisson prime ideals for Kirillov-Kostant-Souriau brackets of finite-dimensional soluble Lie algebras, see \cite{tauvelyu}.
\end{eg}

\begin{eg}
Let $s=x$ and $t=y$ so that $a=xy^{-1}$, $\{x,y\}=0, \{y,z\}=y, \quad \{z,x\}=-x.$
By Theorem~\ref{Pspecexact}, the Poisson spectrum of $A$
consists of $0$, $xA$, $yA$, the ideals $(x+\lambda y)$, $\lambda\in
\C$, the residually null Poisson prime ideal $xA+yA$ and the maximal
ideals $(x,y,z-\alpha)$, $\alpha\in \C$. By Theorem~\ref{primitive}, all the Poisson prime ideals, except $0$ and $xA+yA$,
are Poisson primitive. This example is documented in \cite[p199,
Remark]{BGn} where there is some inaccuracy in that the Poisson
maximal ideals are overlooked and it is stated that $xA+yA$ is
Poisson primitive.
\end{eg}

\begin{eg}
Let $\rho,\sigma,\tau\in \C$, not all $0$, and let $a=\rho x+\sigma y+\tau z$.
Thus $\{x,y\}=\tau$, $\{y,z\}=\rho$ and $\{z,x\}=\sigma$. There are no
residually null Poisson prime ideals and $\Pspec A=\{(a-\mu)A:\mu\in \C\}\cup\{0\}$.
The only Poisson prime ideal that is not Poisson primitive is $0$.
\end{eg}

\begin{eg}
Here we consider the general case where $a\in \C[x^{\pm1},y^{\pm1},z^{\pm1}]$ is a monomial. By the symmetry between $a$ and $a^{-1}$, it is enough to consider the following two cases. Let $j,k,\ell$ be non-negative integers, not all $0$, and let
\begin{enumerate}
\item $a=s=x^jy^kz^\ell$;
\item $s=x^jz^\ell$, $t=y^k$, $a=x^jy^{-k}z^\ell$.
\end{enumerate}
In (1),
\begin{eqnarray*}
\{x,y\}&=&\ell x^jy^kz^{\ell-1}=(x^{j-1}y^{k-1}z^{\ell-1})\ell xy,\\
\{y,z\}&=&jx^{j-1}y^kz^{\ell}=(x^{j-1}y^{k-1}z^{\ell-1})jyz,\mbox{ and }\\
\{z,x\}&=&kx^jy^{k-1}z^{\ell}=(x^{j-1}y^{k-1}z^{\ell-1})kzx.
\end{eqnarray*}
In (2), the same formulae for $\{x,y\}$ and $\{y,z\}$ hold but
\[
\{z,x\}=-kx^jy^{k-1}z^{\ell}=-(x^{j-1}y^{k-1}z^{\ell-1})kzx.\]
The Poisson principal prime ideals are $xA$, unless $j=0$, $yA$, unless $k=0$, $zA$, unless $\ell=0$, those of the form $(x^jy^kz^\ell-\mu)A$, $\mu\in \C\backslash\{0\}$ in Case
(1) and those of the form $(\lambda x^jz^\ell-\mu y^k)A$, $\lambda,\mu\in
\C\backslash\{0\}$, in Case (2).

Any Poisson height two prime ideal must be residually null. The height two prime $xA+yA$ is  Poisson  unless $j+k\leq1$. There are corresponding statements for $xA+zA$ and $yA+zA$. Also, if $j\geq 2$, any height two prime ideal containing $xA$ is Poisson and there are corresponding statements for $yA$ and $zA$.

In addition to $0$ and the maximal ideals containing any of the residually null Poisson primes listed, this specifies
$\Pspec A$. With the exceptions of $xA$ if $j\geq 2$, $yA$ if $k\geq 2$ and $zA$ if $\ell\geq 2$, the Poisson principal prime ideals are Poisson primitive.
\label{jkl}
\end{eg}

\begin{eg}\label{abc}
The results of Example~\ref{jkl} can be applied to the Poisson brackets of the form in Example~\ref{cmegs}(1),
where
\[\{x,y\}=\tau xy,\quad \{y,z\}=\rho yz,\quad \{z,x\}=\sigma zx,\]
where $\tau, \rho, \sigma \in \C$. These are the three-generator cases of a family of Poisson algebras that are interpreted in \cite{goodletz} as semiclassical limits of multiparameter quantum affine spaces. Their Poisson spectrum can be approached using a homeomorphism, established in \cite[Theorem 3.6]{goodletz}, with the prime spectrum of the appropriate quantum affine space.
Here, in the context of the present paper, we consider the Poisson spectrum
for this bracket in the case where $\dim_\Q(\tau\Q+\rho\Q+\sigma\Q)=1$. Other
cases will be considered later.

Renaming the indeterminates and multiplying by a non-zero scalar, we
may assume that  $\tau=\ell$, $\rho=j$ and $\sigma=\pm k$,  where $j, k$ and $\ell$ are coprime non-negative integers
and $\ell>0$.  Thus \begin{equation}\label{pmk}
\{x,y\}=\ell xy,\quad \{y,z\}=jyz,\quad
\{z,x\}=\pm kzx.\end{equation} If we denote this bracket by $\mathcal{B}_1$ then
$\mathcal{B}_2:=x^{j-1}y^{k-1}z^{\ell-1}\mathcal{B}_1$ is one of the
brackets considered in Example~\ref{jkl}. The Poisson spectrum for
$\mathcal{B}_1$ can easily be computed from that of $\mathcal{B}_2$.
For example, if $j>0, k>0$ and
$\ell>0$, then any Poisson prime ideal
for $\mathcal{B}_1$ is a Poisson prime ideal for $\mathcal{B}_2$ but
if $Q$ is a Poisson prime ideal for $\mathcal{B}_2$ then $Q$
is a Poisson prime ideal for $\mathcal{B}_1$ or $Q$ contains at
least one of $x$, $y$ and $z$. In this case, it follows from the results in Example~\ref{jkl}, that the Poisson prime spectrum for $\mathcal{B}_1$ is:
\begin{itemize}
\item $0$;
\item $xA$, $yA$ and $zA$;
\item $(x^jy^kz^\ell-\mu)A$,
$\mu\in \C\backslash\{0\}$ (if $\{z,x\}=kzx$ in \eqref{pmk});
\item $(x^jz^\ell-\mu y^k)A$, $\mu\in \C\backslash\{0\}$, (if $\{z,x\}=-kzx$ in \eqref{pmk});
\item $xA+yA$, $yA+zA$ and $xA+zA$ and the maximal ideals that contain them.
\end{itemize}
Other cases can be approached in the same way but there are cases, for example, if $k=0$ and $j=\ell=1$, where $\mathcal{B}_1$ is an
$A$-multiple of $\mathcal{B}_2$ and  cases, for example when $j=0$,
$k=2$ and $\ell=1$, where neither $\mathcal{B}_1$ nor
$\mathcal{B}_2$ is an $A$-multiple of the other.

For an alternative approach to Poisson spectra for these brackets, see \cite[9.6(b) and 9.9]{goodsemiclass}.

The examples in Example~\ref{cmegs}(2), where,
$\{x,y\}=0$, $\{y,z\}=y$ and $\{z,x\}=-\alpha x$, can be approached in a similar way
when $\alpha=m/n$ is rational, with $m$ and $n$ coprime and $n>0$. The Poisson spectrum is unchanged by multiplication by $n$, giving
\[\{x,y\}=0, \quad \{y,z\}=ny,\quad \{z,x\}=-mx.\] In the above notation, this is
$z^{-1}\mathcal{B}_1=x^{1-j}y^{1-k}\mathcal{B}_2$ with $j=n$, $\ell=0$ and $k=|m|$.  Here $\mathcal{B}_2$ is in Case (2) of Example~\ref{jkl} if $m>0$
and  Case (1) if $m<0$.  The Poisson prime spectrum is:
\begin{itemize}
\item $0$;
\item $xA$ and $yA$;
\item $(x^n-\mu y^m)A$, $\mu\in \C\backslash\{0\}$, (if $m>0$);
\item $(x^ny^{-m}-\mu)A$, $\mu\in \C\backslash\{0\}$, (if $m<0$);
\item $xA+yA$ and the maximal ideals that contain it.
\end{itemize}
\end{eg}

\begin{remark}\label{abcmore}
When $\rho,\sigma,\tau \in \C$ are such that $\dim_\Q(\rho\Q+\sigma\Q+\tau\Q)>1$  the
quadratic Poisson bracket in Example~\ref{abc} is not of the type being considered in this section.
In this case, the Poisson prime spectrum is
readily computed from the results in \cite[9.6(b)]{goodsemiclass}, including the Poisson simplicity
of $B:=\C[x^{\pm 1},y^{\pm 1},z^{\pm 1}]$ in this case.
If $\rho$, $\sigma$ and $\tau$ are all non-zero the Poisson prime ideals of $A$
are $0$, $xA$, $yA$, $zA$, the height two primes $xA+yA$, $yA+zA$
and $zA+xA$ and the maximal ideals containing any one of the Poisson
height two primes. If, for example, $\tau=0$ and $\dim_\Q(\rho\Q+\sigma\Q)>1$ then the Poisson prime ideals of $A$ are
$0$, $xA$, $yA$ and all prime ideals containing $zA$ or $xA+yA$.
In particular, taking $\tau=0, \rho=1$ and $\sigma=-\alpha\in \C\backslash\Q$,
we get the multiple, by $z$, of
the Poisson bracket from Example~\ref{cmegs}(2),
where
\[\{x,y\}=0,\quad \{y,z\}=y,\quad \{z,x\}=-\alpha x.\]
The Poisson primes for this bracket are $0$, $xA$, $yA$ and all prime ideals containing $xA+yA$.
In accordance with \cite[Example 6.4]{goodsemiclass}, all except $xA+yA$ are Poisson primitive.
\end{remark}

\end{document}